\tikzset{every path/.style={line width=0.4pt},every node/.style={transform shape,knot crossing,inner sep=1.5pt},>=triangle 60,text node/.style={rectangle,transform shape=false,black}}
\theoremstyle{plain}      
\newtheorem{thm}{Theorem}[section]     
\newtheorem{theorem}[thm]{Theorem}     
\newtheorem{corollary}[thm]{Corollary}     
\newtheorem{lemma}[thm]{Lemma}     
\newtheorem{proposition}[thm]{Proposition}
\theoremstyle{remark}      
\newtheorem{example}[thm]{Example} 
\newtheorem{remark}[thm]{Remark} 
\theoremstyle{definition}      
\newtheorem{definition}[thm]{Definition}     
\newcommand{\supp}{\mathop{\rm supp}\nolimits}
\newcommand{\di}{\displaystyle}
\newcommand{\Vol}{\mathop{\rm Vol}\nolimits}
\newcommand{\C}{{\mathbb C}}
\def\Jac{{\mathop{\rm Jac}}}
\newcommand{\Log}{\mathop{\rm Log}\nolimits}
\newcommand{\Arg}{\mathop{\rm Arg}\nolimits}
\newcommand{\Val}{\mathop{\rm Val}\nolimits}
\title[]{}
 \author{}
\address{Yen-Kheng Lim\\
Department of Physics, Xiamen University Malaysia, Jalan Sunsuria, Bandar Sunsuria, 43900, Sepang, Selangor, Malaysia.}
\email{yenkheng.lim@xmu.edu.my, yenkheng.lim@gmail.com}
\address{Mounir Nisse\\
Department of Mathematics, Xiamen University Malaysia, Jalan Sunsuria, Bandar Sunsuria, 43900, Sepang, Selangor, Malaysia.
}
\email{mounir.nisse@gmail.com, mounir.nisse@xmu.edu.my}
\thanks{Y-K. L is supported by Xiamen University Malaysia Research Fund (Grant no. XMUMRF/ 2021-C8/IPHY/0001). MN is supported by Xiamen University Malaysia Research Fund (Grant no. XMUMRF/ 2020-C5/IMAT/0013).}
\subjclass[2010]{14T05, 32S55, 53D40}
\keywords{(Co)Amoebas, logarithmic Gauss map, critical values of the argument map, knots.}
\tikzset{
  knot diagram/every strand/.append style={
    thick,
    black
  },
  show curve controls/.style={
    postaction=decorate,
    decoration={show path construction,
      curveto code={
        \draw [blue, dashed]
        (\tikzinputsegmentfirst) -- (\tikzinputsegmentsupporta)
        node [at end, draw, solid, red, inner sep=2pt]{};
        \draw [blue, dashed]
        (\tikzinputsegmentsupportb) -- (\tikzinputsegmentlast)
        node [at start, draw, solid, red, inner sep=2pt]{}
        node [at end, fill, blue, ellipse, inner sep=2pt]{}
        ;
      }
    }
  },
  show curve endpoints/.style={
    postaction=decorate,
    decoration={show path construction,
      curveto code={
        \node [fill, blue, ellipse, inner sep=2pt] at (\tikzinputsegmentlast) {}
        ;
      }
    }
  }
}
\begin{document}

{\color{red}{\today}}
\vspace{0.5cm}
\title{Links represented by phases of algebraic curves}
%{Combinatorial Coamoebas Structure, and Applications}

\author{Yen-Kheng Lim and Mounir Nisse}
%\address{Universit Pierre et Marie Curie - Paris 6, IMJ (UMR 7586),
%Labo: Analyse Algbrique, Office: 15-16-513; 4 Place Jussieu,\\  F-75252 PARIS CEDEX 05, France}
%\email{nisse@math.jussieu.fr}

\begin{abstract}
The prime motivation behind this paper is to prove that any torus link can be realized as the union of the one-dimensional connected components of the set of  critical values of the argument map restricted to a complex algebraic plane curve. Moreover, we give an explicit relation between the Newton polygon of such plane curves,  and the number of components of the given torus link.
This work aims to represent the starting point for a connection between  knot theory, tropical geometry,  and (co)amoebas.

%Moreover, given an isolated  quasi-homogeneous singularity, we describe explicitly the topology and geometry of the link corresponding to this singularity in terms of phases.  In other words,  we  realize this link as the  union of the one-dimensional connected components of the set of critical values of the argument map restricted to the intersection of the curve with a  four-dimensional ball of a sufficiently small radius, centered at the given singularity.   This gives us a link between knot theory and (co)amoebas.
% This established a relation between (co)amoebas and knot theory.
  
 \end{abstract}

\maketitle

\section{Introduction}
%As Thurston's Geometrization program and Freedman's and DonaldsonÕs works have shown, knots and links in $S^3$ have played a significant role in the classification of 3-dimensional and 4-dimensional manifolds. 
Every link $\mathcal{L}$ in the 3-sphere $S^3$ defines a compact, orientable 3-manifold  with tori boundary, that is the link exterior $V(\mathcal{L})= S^3\setminus U(\mathcal{L})$, where $U(\mathcal{L})$ denotes an open neighborhood. Thurston proved that the link complement $S^3\setminus \mathcal{L}$ decomposes into pieces that admit locally homogeneous geometric structures. The most interesting  cases are when  the entire link complements have a hyperbolic structure, namely, a metric of constant curvature $-1$. Moreover, 
 the main problem of knot theory is to determine whether two knots can be rearranged (without cutting) to be exactly similar; more precisely, to be equal or alike. 
The purpose of this paper is to describe a new connection between the set of arguments of complex algebraic curves called 
{\em coamoebas} and {\em knot theory} in order to understand some problems as to find how some invariants of singularities may change by an arbitrarily small deformation, or how the topology and geometry of links can change after a blow up of isolated singularities in the process of resolution. 

The main new idea in this paper is a method  and new view of knots and links as  subsets of phases of complex algebraic plane curves. Thus, our results  open a new way relating deformations of singularities to rearrangements of knots (or links). More precisely, it relates deformations of class of germs of complex algebraic plane curves diffeomorphisms to the rearrangement of links. Moreover, realizing links as phases of complex algebraic curves increases our understanding of the topology and  geometry of 4-dimensional manifolds. Applications of these results on singularities and their deformations and  other techniques will appear in a forthcoming paper in preparation. % \cite{LN-23}.

%This paper has two purposes. The first is to  introduce a method of realizing a torus link by the phases of a complex algebraic plane curve called {\em coamoeba}. The second purpose is to give a topological and geometric description of a singularity link using the set of critical values of the argument map on some complex algebraic curve.

(Co)Amoebas are a very  fascinating notions in mathematics where the  terminology of amoeba has been introduced by I. M. Gelfand, M M. Kapranov and A. V. Zelevinsky in their book (see \cite{GKZ-94}) in 1994, and the terminology of coamoeba has been introduced by M. Passare and A. Tsikh in 2001. Amoebas (resp. coamoebas) have their spines, contours and tentacles (resp. spines, contours and extra-pieces), and they have many applications in real algebraic geometry, complex analysis, mirror symmetry and in several other areas. Amoebas and coamoebas of algebraic hypersurfaces are naturally linked to the geometry  and the combinatorics of  their Newton polytopes. The last fact can be seen in particular with the Viro patchworking principle (i.e., tropical localization) based on the combinatorics of subdivisions of convex lattice polytopes.
 The amoeba $\mathscr{A}$ of an algebraic set $V=\{ f(z)=0 \}$ in the algebraic torus $(\mathbb{C}^*)^n$ is defined as the image of $V$ under the mapping $\Log : (z_1,\ldots , z_n)\mapsto (\log |z_1| ,\ldots , \log |z_n| )$. 
 %The amoeba's complement has a finite number of convex connected components, corresponding to the domains of convergence of the Laurent series expansions of the rational function $\di\frac{1}{f}$.
The coamoeba $co\mathscr{A}$ of an algebraic set $V=\{ f(z)=0\}$ in $(\mathbb{C}^*)^n$ is defined as its image under the argument mapping $\Arg : (z_1,\ldots ,z_n)\mapsto (e^{i\arg (z_1)},\ldots ,e^{i\arg (z_n)})$.  For those unfamiliar with tropical geometry and, in particular, with the notions of (co)amoebas, the critical values, and the critical points of the logarithmic and the argument maps, we recall these notions in sections 2 and 3.
%It is shown in \cite{N1-09} that the complement components of the closure in the flat torus of the coamoeba of a complex algebraic hypersurface defined by a polynomial $f$ with Newton polytope $\Delta$ are convex.

%{\color{blue}{The text}}

In this paper, we show  that for any torus knot $\mathcal{K}$  there exists a complex  algebraic plane curve such that  the one-dimensional  connected  component of the contour (i.e. critical values of the argument map) of its coamoeba realizes  $\mathcal{K}$. 
More generally, we show that any torus link $\mathcal{L}$ is the union of the one-dimensional connected components of the critical values of the argument map restricted to a complex algebraic plane curve; see Theorem \ref{maintheorem}.
%Moreover, we study the topology and geometry of isolated singularities of complex algebraic plane curves.
Moreover, we show that torus links are naturally in connection with the geometry and the combinatorics of the Newton polygons of those algebraic curves.
Also, we give some examples of coamoebas of  complex algebraic plane curves containing torus links as the set of the critical values of the argument map restricted to these curves.

%In Section 3 of this paper, we consider the polynomial $f(z,w)=1+\tau_1 z+\tau_2 w+\tau_3 zw$ where $\tau_1=\tau_2=\tau$ and $\tau_3=1$ is an instance of a \emph{Lee--Yang polynomial} \cite{Yang:1952be,Lee:1952ig}. Recall that a general Lee--Yang polynomial is a polynomial $f(z_1,z_2,\ldots,z_n)$ for which its Newton polytope is a unit hypercube. It describes the partition function of an Ising model of $n$ spins. The roots of Lee--Yang polynomials are important in describing phase transitions of the model. In a theorem proven by Lee and Yang, the univariate polynomial $P(z)=f(z,z,\ldots,z)$ has roots lying on a unit circle in $\mathbb{C}$. Another proof of this theorem using amoebas was provided by Passare and Tsikh in \cite{PT-05}.

%The polynomial $f(z,w)=1+\tau z+\tau w+ zw$  we consider here the $n=2$ case of the Lee--Yang polynomial. Furthermore, the case $0<\tau<1$ corresponds to the ferromagnetic case and $\tau>1$ is the anti-ferromagnetic case. Although phase transitions are only relevant in the thermodynamic limit $n\rightarrow\infty$, it has been shown that for small $n$, there are experimental consequences. Remarkably, the coamoeba of this polynomial can be measured experimentally \cite{CMNL-23}.

\vspace{0.3cm}

\noindent {\bf Organization of the paper.} This paper contains  new results announced by the second author on February 2023 in the Geometry Seminar at Texas A\&M University, and in the Geometry/Topology Seminar at UC. Davis. %,  as well as the Geometry/Topology Seminar at UC Davis. 
The remainder of this paper is organized as follows. Section 2 contains preliminary materials, in particular we give an overview of a number of 
properties of  complex hypersurfaces amoebas proved in  \cite{NS1-13}. %Moreover,  this reviews a structure theorem of  non-Archimedean amoebas proved by M. Kapranov in \cite{K-00}, D. Maclagan and B. Sturmfels in \cite{MS-09}.
 Moeover, it reviews some properties of complex hypersurfaces coamoebas.
In Section 3, we review some properties of the set of critical values of the logarithmic map and the argument map already shown in \cite{M1-00} and   \cite{MN-15}. In Section 4,  we describe torus knots and torus links as critical values of the argument map restricted to some special complex algebraic plane curves. In Section 5, some known torus knots and torus links, as {\em Borromean rings}, {\em Hopf link}, and the {\em Trefoil knot}, 
are realized as a particular subsets in the coamoebas of complex algebraic plane curves, and  we give some other examples.

 %The aim of Section 4 is to present some old and recent results concerning the topology and the geometry of the links associated to isolated singularities of complex algebraic plane curves. We end this paper with some examples of isolated singularities by describing the topology of their  associated links.

%The main problem of Knot Theory is to determine whether two knots can be rearranged (without cutting) to be exactly similar; more precisely, to be equal or alike. Our results in this work open the door to relate the deformations of singularities to the rearrangement of knots (or links). More precisely, it relates deformations of class of germs of complex algebraic plane curves diffeomorphisms to the rearrangement of links. This is an ongoing paper \cite{LN-23}.

%We ask some interesting questions in this direction and we state some conjectures with maybe some ideas on these problems.

%\noindent {\bf Acknowledgements.} We would like to thank Maurice Rojas and Frank Sottile for the discussions that we had during our visit to Texas A\&M University.

\section{Preliminaries}
Let $V$ be  an algebraic hypersurface in the
complex algebraic torus $(\mathbb{C}^*)^n$, where $\mathbb{C}^*
=\mathbb{C}\setminus \{ 0\}$, and $n\geq 1$ is an integer. This means that $V$ is the zero locus of a polynomial:
$$
f(z) = \sum_{\alpha\in \supp (f)} a_{\alpha}z^{\alpha}, \quad\quad\quad\quad
z^{\alpha}=z_1^{\alpha_1}z_2^{\alpha_2}\ldots z_n^{\alpha_n}\quad\quad\quad\quad\quad\quad\quad\quad  %(1)
$$
where each $a_{\alpha}$ is a non-zero complex number and $\supp (f)$ is a
finite subset of $\mathbb{Z}^n$, called the {\em support} of the polynomial
$f$, with convex hull, in $\mathbb{R}^n$, the {\em Newton polytope}
$\Delta_f$ of $f$.
Moreover, we assume that $\supp (f)\subset
\mathbb{N}^n$ and $f$ has no factor of the form $z^{\alpha}$ with
$\alpha =(\alpha_1,\ldots , \alpha_n)$.

%\vspace{0.3cm}

The {\em amoeba}  $\mathscr{A}_f$ of an algebraic hypersurface $V_f\subset
 (\mathbb{C}^*)^n$, or more generally any subset of the complex algebraic torus, 
 is by definition the image of $V_f$ under the map (see M. Gelfand, M.M. Kapranov
 and A.V. Zelevinsky \cite{GKZ-94}):
\[
\begin{array}{ccccl}
\Log&:&(\mathbb{C}^*)^n&\longrightarrow&\mathbb{R}^n\\
&&(z_1,\ldots ,z_n)&\longmapsto&(\log | z_1| ,\ldots ,\log |
z_n|).
\end{array}
\]

\vspace{0.2cm}

The argument map is the map  defined as follows:
\[
\begin{array}{ccccl}
\Arg&:&(\mathbb{C}^*)^n&\longrightarrow&(S^1)^n\\
&&(z_1,\ldots ,z_n)&\longmapsto&(\arg (z_1),\ldots ,\arg (
z_n) ).
\end{array}
\]
where $\di\arg (z_j) = \frac{z_j}{|z_j|}$. The {\em coamoeba} of $V$,  denoted by $co\mathscr{A}$, is its image 
under the argument map (defined for the first time in a talk given  by Passare in 2004). By the same abuse of notation, we denote the restriction of $\Arg$ and $\Log$ to each variety $V\subset (\mathbb{C}^*)^n$ by the same symbol. 

\subsection{Complex tropical hypersurfaces with a simplex Newton polytope}  ${}$ 

\par
 This section reviews essential background on (co)amoebas that will be used in this paper.
Let $\mathbb{K}$ be the field of the Puiseux series with real power, which is the field of formal series $\displaystyle{a(t) = \sum_{j\in A_a}\xi_jt^j}$ with $\xi_j\in \mathbb{C}^*$ and $A_a\subset \mathbb{R}$  a well-ordered subset  of $\mathbb{R}$, which means that any subset  of $A_a$ has a smallest element. It is well known that the field $\mathbb{K}$ is an algebraically closed field of characteristic zero. This field is called  the field of generalized Puiseux series. 
In the following, $K$ will either be the field complex number $\mathbb{C}$ or the field of generalized Puiseux series $\mathbb{K}$.
Let $a=(a_1,\ldots ,a_n)\in (K^*)^n $ and  $P_a\subset(K^*)^n$ be the hyperplane defined by the polynomial $f_a(z_1,\ldots ,z_n) = 1+\sum_{j=1}^na_jz_j$, and  let $P_1$ be the hyperplane defined by $f_1(z_1,\ldots ,z_n) =1+\sum_{j=1}^nz_j $, then it is clear that 
if $\tau_{a^{-1}}$ is the translation in the multiplicative group $(K^*)^n$ by $a^{-1}$, then  we have
$P_a = \tau_{a^{-1}}(P_{1})$. Let $L$ be an invertible matrix with integer coefficients and positive determinant such that:
$$
{}^tL = \left(
\begin{array}{ccc}
\alpha_{11}&\hdots&\alpha_{1n}\\
\vdots&\ddots&\vdots\\
\alpha_{n1}&\hdots&\alpha_{nn}
\end{array}
\right) ,
$$
and $\Phi_{L,\, a}$ be the homomorphism of the algebraic torus defined as follow.
\[
\begin{array}{ccccl}
\Phi_{L,\, a}&:&(K^*)^n&\longrightarrow&(K^*)^n\\
&&(z_1,\ldots ,z_n)&\longmapsto&(a_1\prod_{j=1}^nz_j^{\alpha_{1j}},\ldots ,a_n\prod_{j=1}^nz_j^{\alpha_{nj}}).
\end{array}
\]
Let $V_f\subset (\mathbb{K}^*)^n$ be the hypersurface defined by the polynomial 
$$
f(z_1,\ldots ,z_n)=1+\sum_{k=1}^na_k\prod_{j=1}^nz_j^{\alpha_{kj}}.
$$
Let us denote by 
$co\mathscr{A}(V_f)$ 
the set of arguments of $V_f$, and by the same notation  $co\mathscr{A}(V_f)$ its lifting in the universal covering $\mathbb{R}^n$ of the real torus (abuse of notation). Let $\tau_{\Arg (a)}$ be the translation by the vector $(\arg (a_1),\ldots ,\arg (a_n))$ in $\mathbb{R}^n$.

%\vspace{0.1cm}
%\noindent
If we denote by $\tau_{\tilde{a}}$ the translation in $\mathbb{R}^n$ by the vector ${}^tL^{-1}(\arg (a_1),\ldots ,\arg (a_n))$, then we have the following commutative diagram:
\begin{equation}
\xymatrix{
V_f\ar[d]_{\Arg}\ar[rr]^{\Phi_{L,\, a}}&&P_1\ar[d]^{\Arg}\cr
\mathbb{R}^n\ar[rr]^{{}^tL\circ\tau_{\tilde{a}}}&&\mathbb{R}^n.
}\nonumber
\end{equation}
We have the same diagram if we replace $\Arg$ by the logarithmic map $\Log$
  (or the valuation map $\Val$ if we work in $(\mathbb{K}^*)^n$).

\vspace{0.2cm}
\newpage

\begin{lemma}\label{lem1} Let $V_f$ be a hypersurface defined by a polynomial $f$ with Newton polytope a simplex $\Delta$, and such that its support $\supp (f)$ is precisely the vertices of $\Delta$. Then the following hold:
\begin{itemize}
\item[(i)]\, if $K = \mathbb{C}$ (resp. $\mathbb{K}$), then the amoeba $\mathscr{A}_f$ of the hypersurface $V_f$ is the image under $\tau^{-1}_{\Log (a)}\circ {}^tL^{-1}$ (resp. $\tau^{-1}_{\Val (a)}\circ{}^tL^{-1}$) of the amoeba of the standard hyperplane $P_1$. %In particular it is solid.
\item[(ii)]\, if $K = \mathbb{C}$  or $\mathbb{K}$, then the coamoeba $co\mathscr{A}_f$ of the hypersurface $V_f$ is the image under $\tau^{-1}_{\Arg (a)}\circ{}^tL^{-1}$ of the coamoeba of the standard hyperplane $P_1$. In particular, the number of its complement components in the real torus $(S^1)^n$ is equal to $n!\Vol (\Delta)$.
\end{itemize}
\end{lemma}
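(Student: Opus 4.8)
The plan is to reduce both statements to the already-understood geometry of the standard hyperplane $P_1$, exploiting that $f$ is a pull-back of $f_1$. First I would record the identity $f = f_1\circ\Phi_{L,a}$, where $f_1(z)=1+\sum_{j=1}^n z_j$; hence $V_f=\Phi_{L,a}^{-1}(P_1)$ and $\Phi_{L,a}$ restricts to a surjection $V_f\to P_1$. Since $\Phi_{L,a}$ is a multiplicative translation composed with the monomial map attached to the integer matrix ${}^tL$, it is an unramified covering of $(K^*)^n$ whose degree is $|\det L|=\det L$; because $\Delta$ is the simplex on the vertex set $\{0,\ell_1,\dots,\ell_n\}$ (the $\ell_k$ being the columns of $L$), this degree equals $n!\Vol(\Delta)$. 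I would isolate this numerical fact at the outset, as it is exactly what governs the component count in (ii). Both assertions then follow by transporting the description of $P_1$ through the two commutative diagrams recorded above.

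For (i) I would invoke the diagram with $\Arg$ replaced by $\Log$ (resp.\ by $\Val$ over $\K$). Its lower horizontal arrow ${}^tL\circ\tau_{\tilde a}$ is a genuine affine isomorphism of $\R^n$: here ${}^tL$ acts as an invertible real-linear map, with no covering subtlety, because $\Log$ (resp.\ $\Val$) forgets the arguments along which $\Phi_{L,a}$ is many-to-one. Taking images yields $\mathscr{A}_f=(\tau^{-1}_{\Log(a)}\circ{}^tL^{-1})(\mathscr{A}(P_1))$ (resp.\ with $\Val$). As this is a homeomorphism of $\R^n$, it carries complement components to complement components bijectively and intertwines the order maps. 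The amoeba of $P_1$ is classically solid --- its complement has exactly the $n+1$ components indexed by the vertices of the standard simplex --- so $\mathscr{A}_f$ inherits solidity.

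For (ii) the same diagram with $\Arg$ gives $co\mathscr{A}_f=(\tau^{-1}_{\Arg(a)}\circ{}^tL^{-1})(co\mathscr{A}(P_1))$ (available for both $K=\C$ and $K=\K$), which on the torus $(S^1)^n$ I would read as: $co\mathscr{A}_f$ is the preimage of $co\mathscr{A}(P_1)$ under the self-covering of $(S^1)^n$ induced by ${}^tL$, postcomposed with the homeomorphism $\tau^{-1}_{\Arg(a)}$. The translation does not affect counting, and the linear part is a covering of degree $\det L=n!\Vol(\Delta)$, so the complement of $co\mathscr{A}_f$ is the preimage of the complement of $co\mathscr{A}(P_1)$. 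By the convexity theorem quoted in the introduction, each complement component of $co\mathscr{A}(P_1)$ is convex, hence lifts trivially along the covering; the monodromy over it is therefore trivial and it pulls back to exactly $\det L$ disjoint components. Since the complement of $co\mathscr{A}(P_1)$ is a single convex component (the $L=\mathrm{id}$ base case, where $n!\Vol=1$), the total number of complement components of $co\mathscr{A}_f$ is $\det L=n!\Vol(\Delta)$.

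The hard part is this last counting step: one must guarantee that passing to the preimage multiplies the number of complement components by exactly the covering degree, i.e.\ that none of the $\det L$ sheets over a complement component merge and that no component is created or destroyed along the boundary. This is precisely where convexity is indispensable: convexity forces the image of $\pi_1$ of each complement component of $co\mathscr{A}(P_1)$ in $\pi_1((S^1)^n)$ to be trivial, so the finite covering restricts to a trivial covering over it and splits it into the full number $\det L$ of copies. Carrying out this monodromy argument carefully --- rather than the formal diagram chase, which is routine --- is the crux of the proof.
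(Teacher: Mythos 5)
Your argument is correct and is essentially the paper's own proof: both reduce to the standard hyperplane via $\Phi_{L,a}$ (the paper proves $\Phi_{L,a}(V_f)=P_1$ by an explicit computation with exponentials, you via the pullback identity $f=f_1\circ\Phi_{L,a}$, which gives the slightly sharper statement $V_f=\Phi_{L,a}^{-1}(P_1)$), and then transport $\mathscr{A}(P_1)$ and $co\mathscr{A}(P_1)$ through the commutative diagrams, using $\det(L)=n!\Vol(\Delta)$. The difference is one of rigor rather than of route: where the paper concludes by appealing to ``the properties of the logarithmic and the argument maps'' and ``the properties of the amoeba and the coamoeba of the standard hyperplane,'' you spell out the covering-space step --- the complement of $co\mathscr{A}_f$ is the preimage of the single convex complement component of $co\mathscr{A}(P_1)$ under a degree-$\det(L)$ torus covering, and convexity forces trivial monodromy, hence exactly $\det(L)$ disjoint lifts --- which supplies precisely the justification the paper leaves implicit.
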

This means that we have the following:
$$
\hspace{0.3cm} \tau^{-1}_{\Log (a)}\circ{}^tL^{-1}\ (\mathscr{A}_{P_1}) = \mathscr{A}_f, \quad\mbox{and}\quad
\tau^{-1}_{\Arg (a)}\circ {}^tL^{-1}  (co\mathscr{A}_{P_1}) = co\mathscr{A}_f. 
$$

\begin{proof} 
First of all, we can see that the Newton polytope $\Delta_f$ of $f$ is the image under the linear map $L$ of the standard simplex. The matrix $L$ is invertible, so $\Phi_{L,\, a}(V_f) = P_1$. Indeed, if $(z_1',\ldots ,z_n')$ is in $P_1$, then there exists $(u_1,\ldots ,u_n)\in \mathbb{C}^n$ such that for any $1\leq j\leq n$, we have $z_j'= a_je^{u_j}$. The matrix $L$ is invertible, so its column vectors $\alpha_k$ are linearly independent. Hence, there exists a vector $(v_1,\ldots ,v_n)\in \mathbb{C}^n$ which is a solution of the following linear system:
$$
\begin{array}{ccccccc}
\alpha_{11}x_1&+&\hdots&+&\alpha_{1n}x_n&=& u_1\\
\alpha_{21}x_1&+&\hdots&+&\alpha_{2n}x_n&=& u_2\\
\vdots&\vdots&\vdots&\vdots&\vdots&\vdots&\vdots\\
\alpha_{n1}x_1&+&\hdots&+&\alpha_{nn}x_n&=& u_n;
\end{array}
$$
and then 
$$
\begin{array}{lll}
\Phi_{L,\, a} (e^{v_1},\ldots ,e^{v_n}) &= & (a_1\prod_j e^{\alpha_{1j}v_j},\ldots ,a_n\prod_j e^{\alpha_{nj}v_j})\\
&=& (a_1e^{u_1},\ldots ,a_ne^{u_n})\\
&=& (z_1',\ldots ,z_n').
\end{array}
$$
But  $(e^{v_1},\ldots ,e^{v_n})\in V_f$, because $1+ \sum_k a_k\prod_j (e^{v_j})^{\alpha_{kj}} = 1 + \sum_k a_k e^{\sum_jv_j\alpha_{kj}}  = 1 + \sum_k a_k e^{u_k} = 1 + \sum_k z_k' = 0$. So, $\Phi_{L,\, a}(V_f) = P_1$, and the Lemma \ref{lem1} is done after using the properties of the logarithmic and the argument maps on one hand, and the properties of the amoeba and the coamoeba of the standard hyperplane on the other hand, and the fact that $\det (L) = n!\Vol (\Delta )$. 

%Recall that the amoeba of the standard hyperplane is solid (i.e., the number of complement components of the amoeba coincide with the number of the  Newton polytope vertices) because of the injectivity of the order map, and the fact that the standard simplex contains no interior lattice point. 
\end{proof}

\begin{example}

Using Lemma \ref{lem1}, we draw in Figure 1 the coamoeba  of the complex  curve defined by the polynomial $f_1(z_1,z_2)= 1+ z_1^2z_2^3+z_1^3z_2$
where the matrix  ${}^tL_1^{-1}$ is equal to $\frac{1}{7}\left(
  \begin{array}{cc} 3&-1\\ -2&3\end{array}\right) $.

%and in figure 2 the coamoeba of the complex curve 
%defined by the polynomial $f_2(z_1,z_2) = z_1^2z_2^2+z_1+z_2$
%where the matrix  ${}^tL_2^{-1}$ is equal to $\frac{1}{3}\left(
%  \begin{array}{cc} 1&1\\ -2&1\end{array}\right)$.

\begin{figure}[H]
\centering
\begin{subfigure}[b]{0.49\textwidth}
    \centering
    \begin{tikzpicture}[scale=0.13]
        \draw (-12,-12) grid (12,12);

        \draw[line width=0.4mm,green] (-12,0) -- (-8,-12);
        \draw[line width=0.4mm,green] (0,-12) -- (-8,12);
        \draw[line width=0.4mm,green] (0,12) -- (8,-12);
        \draw[line width=0.4mm,green] (8,12) -- (12,0);

        \draw[line width=0.4mm,red] (-12,-8) -- (-6,-12);
        \draw[line width=0.4mm,red] (-6,12) -- (12,0);
        \draw[line width=0.4mm,red] (-6,12) -- (12,0);
        \draw[line width=0.4mm,red] (-12,0) -- (6,-12);
        \draw[line width=0.4mm,red] (6,12) -- (12,8);
        \draw[line width=0.4mm,red] (-12,8) -- (12,-8);
  
        \draw[line width=0.4mm,blue] (0,-12) -- (12,-6);
        \draw[line width=0.4mm,blue] (-12,-6) -- (12,6);
        \draw[line width=0.4mm,blue] (-12,6) -- (0,12);

        \shade[top color=yellow, bottom color=black] (-12,-6) -- (-12,-8) -- (-8.6666,-10.3333) -- (-10.33,-5.3333) -- cycle;
        \shade[top color=yellow, bottom color=black] (-12,0) -- (-10.3333,-5.3333) -- (-6.86,-3.5) -- cycle;
        \shade[top color=yellow, bottom color=black] (-3.5,-1.8) -- (-1.8,-6.9) -- (-6.86,-3.5) -- cycle;
        \shade[top color=yellow, bottom color=black] (-3.5,-1.8) -- (0,0) -- (-5.3,3.5) -- cycle;
        \shade[top color=yellow, bottom color=black] (5.1,-3.4) -- (0,0) -- (3.5,1.8) -- cycle;
        \shade[top color=yellow, bottom color=black] (6.9,3.5) -- (1.8,6.9) -- (3.5,1.8) -- cycle;
        \shade[top color=yellow, bottom color=black] (6.9,3.5) -- (10.3,5.1) -- (12,0) -- cycle;
        \shade[top color=yellow, bottom color=black] (8.6,10.3) -- (10.3,5.1) -- (12,6)  -- (12,8) -- cycle;
        \shade[top color=yellow, bottom color=black] (8.6,10.3) -- (8,12) -- (6,12) -- cycle;
        \shade[top color=yellow, bottom color=black] (0,12) -- (1.8,6.9) -- (-3.4,10.3) -- cycle;
        \shade[top color=yellow, bottom color=black] (-10.4,6.9) -- (-6.9,8.56) -- (-5.3,3.5) -- cycle;
        \shade[top color=yellow, bottom color=black] (-8,12) --  (-6,12) -- (-3.4,10.3) -- (-6.9,8.56) -- cycle;
        \shade[top color=yellow, bottom color=black] (-10.4,6.9) -- (-12,8) -- (-12,6) -- cycle;
        \shade[top color=yellow, bottom color=black] (-8,-12) -- (-8.6666,-10.3333) -- (-6,-12) -- cycle;
        \shade[top color=yellow, bottom color=black] (0,-12) -- (-1.8,-6.9) -- (3.4,-10.3) -- cycle;
        \shade[top color=yellow, bottom color=black] (6,-12) -- (8,-12) -- (6.86,-8.7) -- (3.4,-10.3) -- cycle;
        \shade[top color=yellow, bottom color=black] (10.3,-6.9) -- (12,-6) -- (12,-8) -- cycle;
        \shade[top color=yellow, bottom color=black] (5.1,-3.4) -- (6.87,-8.7) -- (10.3,-6.9) -- cycle;
\end{tikzpicture}
    \caption{}
    \label{fig_M1a}
  \end{subfigure}
  \begin{subfigure}[b]{0.49\textwidth}
    \centering
    \begin{tikzpicture}[scale=0.25]
        \draw (-6,-6) grid (6,6);
        \draw[line width=0.3mm,green] (-6,-4) -- (6,0);
        %\draw[line width=1mm,red] (0,-6) -- (6,-2);
        \draw[line width=0.3mm,green] (-6,0) -- (6,4);
        \draw[line width=0.3mm,green] (-6,4) -- (0,6);
        \draw[line width=0.3mm,green] (0,-6) -- (6,-4);
        \draw[line width=0.3mm,red] (-6,-3) -- (0,0);
        \draw[line width=0.3mm,red] (0,0) -- (6,3);
        \draw[line width=0.3mm,red] (-6,3) -- (0,6);
        \draw[line width=0.3mm,red] (0,-6) -- (6,-3);
        \draw[line width=0.3mm,blue] (-6,0) -- (0,0);
        \draw[line width=0.3mm,blue] (0,0) -- (6,0);
        %{\draw[line width=1mm,red](-1,1)--(1,-1);}

        \shade[top color=yellow, bottom color=black] (0,0) -- (6,3) -- (6,4) -- (-6,0) -- cycle;
        \shade[top color=yellow, bottom color=black] (0,0) -- (6,0) -- (-6,-4) -- (-6,-3) -- cycle;
        \shade[top color=yellow, bottom color=black] (0,-6) -- (6,-4) -- (6,-3) -- cycle;
        \shade[top color=yellow, bottom color=black] (-6,3) -- (0,6) -- (-6,4) -- cycle;
        %
        %\draw[fill=green] (0,0) -- (6,3) -- (6,4) -- (-6,0) -- cycle;
        %\draw[fill=green] (0,0) -- (6,0) -- (-6,-4) -- (-6,-3) -- cycle;
        %\draw[fill=green] (-6,3) -- (0,6) -- (-6,4) -- cycle;
        %\draw[fill=green] (0,-6) -- (6,-4) -- (6,-3) -- cycle;
\end{tikzpicture}    
    \caption{}
    \label{fig_M1b}
  \end{subfigure}
\caption{In these figues the squares represents a fundamental domain $[0,2\pi]\times [0,2\pi]$ of the real torus $S^1\times S^1$.  (A)  The coamoeba of the curve defined by the polynomial $f_1$. (B) The coamoeba of the curve with defining polynomial $g(z,w)=z+w^2+w^3$.} 
\label{fig:M}
\end{figure}

%\vspace{0.1cm}

\end{example}

%\vspace{0.4cm}

More generally, if $L$ is an invertible $(n\times n)$-matrix with integer entries, and 
$f(z)= \sum a_{\alpha\in A} z^{\alpha}$ is a Laurent polynomial,  then $f_L$ is the polynomial defined as follows:
 $$
 f_L(z):=\sum_{\alpha\in A} a_\alpha z^{L(\alpha)},
 $$
 where $\alpha\in \mathbb{Z}^n$ is viewed as a column vector. In this case, the map  $x\mapsto {}^tL(x)$ where ${}^tL$ denotes the transpose matrix, sends the amoeba (resp. coamoeba) of $V_{f_L}$ into the amoeba (resp. coamoeba) of $V_f$. For more details see \cite{NS1-13}.

\vspace{0.2cm}

\section{Critical points of the logarithmic and the argument maps}

In this section, we will explore the relationship between the sets of critical points of the logarithmic and the argument maps, and  we
%The main goal of this section is to 
give an explicit description of these sets. More precisely, given a map  $f$, we denote by $\Jac(f)$ the Jacobian matrix of $f$.
% and by $\J(f)$ the determinant of $\Jac(f)$ when it exists. 
It was shown in  \cite{MN-15} the  following proposition:

\begin{proposition}[Madani-Nisse, 2015]\label{jacobian matrix}
Let $V$ be a $k$-dimensional complex submanifold in $(\C^*)^n$. The maps $\Log$ and $\Arg$ are well defined on $V$ and  
\begin{equation}
 \partial \Log=\frac{1}{\Arg}\partial\Arg,\quad   \bar\partial \Log=\frac{-1}{\Arg}\bar\partial\Arg .\nonumber
\end{equation} 
\end{proposition}

\begin{proof}
We denote by $\{z_j\}_{1\leq j\leq n}$ the complex coordinates on $ \C^n$ and by $\{t_j\}_{1\leq j\leq k}$ the complex coordinates on $V$ given by a local chart $(\Omega,f)$ (i.e. $\forall z\in\Omega,\; t_j=f_j(z)$), where $\Omega$ is an open set of $V$ and $f$ is a holomorphic function from an open set of $\C^n$ to $\C^k$. Since $V$ is a complex submanifold of $(\C^*)^n$, the injection map $\imath: V\hookrightarrow (\C^*)^n$ is holomorphic. 
By definition,  for any $z\in V$ we have $\imath (z)=e^{Log z}\Arg z$. Since $\imath$ is holomorphic, $\bar\partial \imath (z)=0$ for any $z\in V$ (i.e. $\forall j\leq k,\; \partial_{\bar t_j} \imath (z)=0$ ). It implies that for any $j=1,\ldots,k$ and $z\in \Omega$ we have
\begin{gather*}
\partial_{\bar t_j} \Log (z) =-\frac{1}{\Arg(z)}\partial_{\bar t_j} \Arg(z), \quad\partial_{t_j}\Log(z) =\frac{1}{\Arg(z)} \partial_{t_j}\Arg(z),
\end{gather*}
where the second equality holds by conjugating the first one. The statement of the proposition follows.
\end{proof}

%\newpage

\vspace{0.2cm}

From  Proposition \ref{jacobian matrix}, we obtain the following:
  
\begin{corollary}\label{jacobian det}
Let $V$ be a $k$-dimensional complex submanifold in $(\C^*)^n$. The set of critical points of the maps $\Log$ and $\Arg$ coincide.  
\end{corollary}

\begin{proof}
Let 
\begin{equation*}
S=\{z\in V\; \vert\; \textrm{rank}\;\Jac\,\Log_z <\min(n,2k)\}
\end{equation*}
be the set of critical points of the logarithmic map $\Log$. Using Proposition \ref{jacobian matrix}, we conclude that  the Jacobian matrices of $\Log$ and $\Arg$ have the same rank at any point in $V$. Thus, the set of critical points of $\Arg$ is also $S$.

\end{proof}
\vspace{0.2cm}

Let  $V\subset (\mathbb{C}^*)^n$ be  a complex algebraic hypersurface
defined by a polynomial $f$ and suppose that $V$ is nowhere singular. The {\em logarithmic Gauss map} 
is a rational map from  $V$ to $\mathbb{CP}^{n-1}$ defined as follows:
 \begin{eqnarray*}
\gamma :&V&\longrightarrow\,\,\,\mathbb{CP}^{n-1}\\
&z&\longmapsto\,\,\, \gamma (z) =  [z_1\frac{\partial f}{\partial z_1}(z):\cdots :z_n\frac{\partial f}{\partial z_n}(z)],
\end{eqnarray*}

\noindent We have the following  commutative diagram:

\begin{equation}
\xymatrix{
V \ar[rr]^{\gamma}&& \mathbb{C}\mathbb{P}^{n-1}\cr
Critp(\Log_{|V})\ar[rr]^{\gamma_c}\ar[u]^{\cup}\ar[dr]_{\Log}&&\mathbb{R}\mathbb{P}^{n-1}\ar[u]_{\cup} \cr
&Critv(\Log_{|V})\ar[ru]_{g},
}\nonumber
\end{equation}
where $\cup$  denotes the natural inclusion. The map 
$g$ is the usual Gauss map defined on the smooth part of 
$Critv(\Log_{|V})$,  and $\gamma_c = \gamma_{|Critp(\Log_{|V})}$ is the restriction of $\gamma$ to the set of  critical points $Critp(\Log_{|V})$ of the logarithmic map. This diagram was used in several places e.g.  by Mikhalkin in several  talks,  in Nisse's thesis,  and in  Passare and Risler's  paper \cite{PR-11}.

\vspace{0.2cm}

\begin{lemma}[Mikhalkin, 2000]
The set of critical points of the logarithmic map ${Critp(\Log_{|V})}$ is equal to the inverse image of the real projective space under the logarithmic Gauss map $\gamma$.

\end{lemma}

\vspace{0.2cm}

\begin{definition}\label{regular-point}
A point  $x$  in $Critv(\Log_{|V})$ is called regular if and only if  $\Log^{-1}(x)\cap V$ is contained in the set of regular points of the restriction of the logarithmic Gauss map to $Critp(\Log_{|V})$. In other  words,  $\Log^{-1}(x)\cap V$ contains no critical point of $\gamma$.
\end{definition}

\vspace{0.2cm}

\begin{lemma}\label{lemmaC}
Let $V$ be a complex algebraic hypersurface. Let $x$ be a point in the boundary of the amoeba $\partial\mathscr{A}(V)$. then the set $(\Log^{-1}(x)\cap V)$ is contained in $Critp(\Log_{|V})$.
\end{lemma}

\begin{proof}
Assume  that there exists $z\in V$ such that $\Log (z) = x$ and  $z$ is a regular point of the logarithmic map, i.e., the Jacobian $\Jac (\Log_{|V})_{z}$ of the logarithm map restricted to $V$ at the point $z$ has maximal rank.
The fact that the set of regular points of the logarithmic map is an open subset of $V$, implies that there exists an open subset $U_z$ in $V$ containing $z$, such that $\Log_{|U_z}$ is a submersion. Hence, the point 
$x$ must be a regular value. In other words, the point $x$ must be in the interior of the amoeba 
and  not in its boundary. This contradict our  hypothesis on $x$.
\end{proof}

 \vspace{0.0cm}
 
\begin{example} 
Let $\mathcal{H}$ be the real algebraic plane curve (hyperbola) parametrized as follows:
\begin{eqnarray*}
\rho :&\mathbb{C}^*\setminus \{ -1, -\frac{1}{6}\}&\longrightarrow\,\,\,(\mathbb{C}^*)^{2}\\
&z&\longmapsto\,\,\, \rho (z) =  -\frac {z+\frac{1}{6}}{z+1}.
\end{eqnarray*}
	Its amoeba has a non regular critical value $x_0$, called by Mikhalkin a pinching point (Remark 10,  \cite{M1-00}). The inverse image of the point $\di x_0 = (-\frac{\log 6}{2}, -\frac{\log 6}{2})$  by the logarithmic map in $\mathcal{H}$ is a circle 
	%(i.e.,  $\mathcal{H}\cap\Log^{-1}(x_0)$ is a circle 
	but not geodesic in the flat torus $\Log^{-1}(x_0) = S^1\times S^1$.  As the set of critical points of the logarithmic map and the argument map coincide, then the image of
 $\mathcal{H}\cap\Log^{-1}(x_0)$  under the argument map is a circle which is equal to the 1-dimensional connected component of the set of critical values of the argument map restricted to
 $\mathcal{H}$.  Moreover, the restriction of the argument map to $\mathcal{H}\cap\Log^{-1}(x_0)$ is an homeomorphism into its image.
 
  \vspace{0.1cm}
 
 The set of critical values of the argument map is a non geodesic circle $\delta$ which has two different real points (i.e., it intersects the finite real subgroup $(\mathbb{Z}_2)^2$ of  the whole real torus in two points) union the isolated point $(\pi , \pi)$. The circle $\delta$ is also critical for the logarithmic Gauss map $\gamma_c$. More precisely, there are two real branches $\mathscr{B}_1$ and $\mathscr{B}_2$ of critical points intersecting $\mathcal{H}$ in two different real points contained in two different quadrants of $(\mathbb{R}^*)^2$, namely the quadrants $(+,-)$ and $(-,+)$. The image of each branch by the logarithmic map has an inflection point at $x_0$.

\begin{figure}[H]
 \centering 
  \begin{subfigure}[b]{0.4\textwidth}
    \centering
    \includegraphics[width=0.7\textwidth]{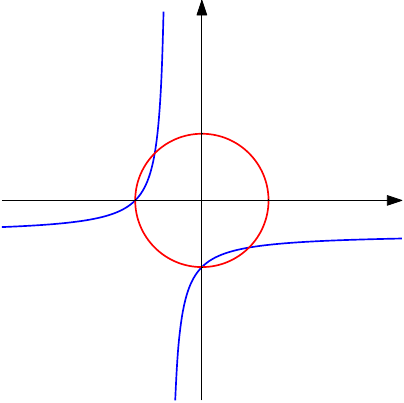}
    \caption{}
    \label{fig_Pinch-Amoeba-A}
  \end{subfigure}
  \begin{subfigure}[b]{0.5\textwidth}
    \centering
    \includegraphics[width=1.3\textwidth]{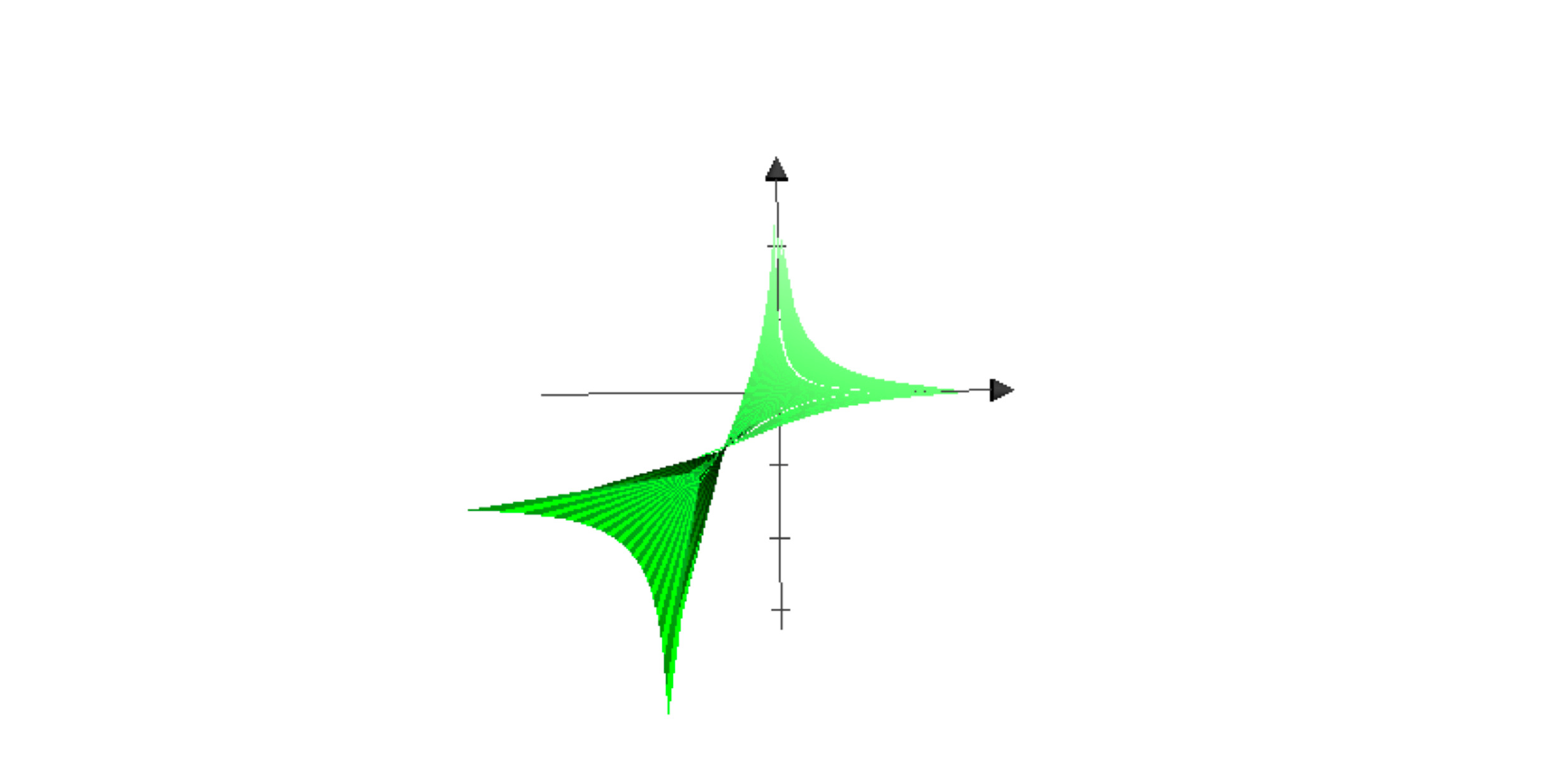}
    \caption{}
    \label{fig_Pinch-Amoeba-AR-1}
  \end{subfigure}
  \caption{(A) The real part of the hyperbola with defining polynomial $f(z,w) = \frac{1}{6} +z+w+zw$ and a circle of radius $\frac{1}{6}$. (B) The amoeba   of the real hyperbola in $(\mathbb{C}^*)^2$ with defining polynomial $f(z,w) = \frac{1}{6} +z+w+zw$.}
  \label{fig_Pinch}
\end{figure}

 In Figure 2. (A), the image under the logarithmic map of the two points intersection of the red circle with the real part of the hyperbola outside the axis is precisely the point $x_0$.

 \vspace{0.1cm}
 
 \begin{remark}
 The inverse image of the point ${x_0}\in \mathbb{R}^2$ by the logarithmic map intersects  the real algebraic plane curve $\mathcal{H}$ into a circle $\delta\subset \Log^{-1}(x_0) = S^1\times S^1$ which is a subset of the set of critical points of the logarithmic map. On the other hand, the circle  $\delta$ is homeomorphic to its image $\Arg (\delta)$ under the argument map  which is a subset of the set of critical values  of the argument map. For this reason we say that it can be represented  as a subset of the critical points of the argument map (which is equal to the set of critical points of the logarithmic map by Corollary 3.2) and in the same time as a subset of the set of critical values of the argument map.
 \end{remark}

\begin{figure}[H]
\begin{center}
 \centering 
  \begin{tikzpicture}[scale=0.25]
        \draw (-6,-6) grid (6,6);

        \draw[line width=0.4mm,green] (-6,0) -- (6,0);
        \draw[line width=0.4mm,black] (0,-6) -- (0,6);

        %\draw (-2,0) -- (2,0);
        \filldraw [gray] (0,0) circle (4pt);
        \draw[line width=0.7mm,red]  (6,0) .. controls (0.5,-0.5) .. (0,-6);  
        \draw[line width=0.7mm,red]  (-6,0) .. controls (-0.5,0.5) .. (0,6);
        %\draw (-6,0) .. controls (-1,0) and (1,0) .. (0,-6);
        \shade[top color=yellow, bottom color=black] (0,0) -- (-6,0) .. controls (-0.5,0.5) .. (0,6) -- cycle;
        \shade[top color=yellow, bottom color=black] (0,0) -- (0,-6) .. controls (0.5,-0.5) .. (6,0) -- cycle;
    \end{tikzpicture}
    \caption{The coamoeba of the real hyperbola in a fundamental domain  $[0,2\pi]\times [0, 2\pi]$ of the 2-dimensional real torus $S^1\times S^1$. The critical values of the argument map is the topological circle in red color (which isotopic to the diagonal).}
    \label{c}
\end{center}
\end{figure} 

\end{example}

%\vspace{0.1cm}
%\newpage
\section{(Co)amoebas and torus knots}

%\vspace{0.3cm}

Throughout this section, we consider the set of  knots and links in  the $3$-dimensional sphere $S^3$. As $S^3$ can be viewed as $\mathbb{R}^3 \cup \{\infty\}$,  and  since any $1$-dimensional submanifold in $S^3$ may avoid a point,  then up to equivalence, we can consider the set of  knots and links as subsets in $\mathbb{R}^3$.

%\vspace{0.1cm}

\begin{definition}
A {\em knot} is a   subset of $\mathbb{R}^3$ homeomorphic to  a circle.  
\end{definition}

%\vspace{0.1cm}

\begin{definition}
A {\em link} with $k$-components is the image of an embedding of a disjoint union of $k$ copies of $S^1$ in $\mathbb{R}^3$. A link with a single component is called a {\em knot}. Equivalently a link is a subset of $\mathbb{R}^3$ homeomorphic to a disjoint union of circles.
\end{definition}

%\vspace{0.1cm}

A knot is {\em trivial}, or {\em unknot} if it is not knotted.
 
%\vspace{0.1cm}

\begin{definition}
A {\em torus knot}  (resp. {\em torus link}) is a knot  (resp. a link) that lies on  an unknotted $2$-dimensional real torus embedded in the $3$-dimensional real space $\mathbb{R}^3$. 
\end{definition}

%\vspace{0.2cm}

Every torus knot (resp. torus link) is associated to a pair of coprime (resp. not coprime) integers $p$ and $q$.  In this case,  the number of connected components  of the torus link is the greatest common divisor $\textrm{gcd}(p, q)$. A torus knot is trivial or unknot if and only if either $p$ or $q$ is equal to $1$ or $-1$.

\begin{definition}
Let $I=[A,B] \subset \mathbb{R}^n$ be a segment  with $A$, and $B$ in $\mathbb{Z}^n$. The integer length of the  segment $I$, denoted  by $\textrm{intl}_\mathbb{Z}(I)$,  is defined as follow:
$$
\textrm{intl}_\mathbb{Z}(I) :=\#\big(\mathbb{Z}^n\cap I\big) -1,
$$
where $\# (X)$ denotesthe cardinality of the set $X$.
 \end{definition}

\vspace{0.1cm}

Let us denote by $\mathscr{P}(2)$ the set of polynomials  whose Newton polygons are parallelograms, and consider  the set of real algebraic plane curves  with defining  polynomials in $\mathscr{P}(2)$.
   Since we are working in the complex algebraic torus $(\mathbb{C}^*)^2$, without loss of generality, a polynomial with Newton polygon a square $\mathscr{S}$ of sides of length $1$ can be written as follows:
$$
f(z, w) = 1 + \tau_1 z + \tau_2 w + \tau_3zw,
$$ 
where $\tau_i$ are real numbers different than zero for $i=1,2,3$. Thus, if $g$ is a polynomial in $\mathscr{P}(2)$, then its Newton polygon is, up to a translation by a vector with integer components,  the image of $\mathscr{S}$ under a  linear map in $GL(2,\mathbb{Z})$.
In other words, $g = f_L$, where 
$$
L = \left(\begin{array}{cc} a&b\\ c&d\end{array}\right),
$$
with $a, b, c, d$ are integers, and $g(z,w) = 1 + \tau_1 z^aw^c + \tau_2 z^bw^d + \tau_3  z^{a+b}w^{c+d}$.
 Therefore, using Lemma \ref{lem1}, the amoeba $\mathscr{A}(V_g)$ (respectively the coamoeba $co\mathscr{A}(V_g)$) of $V_g=\{  (z,w)\in (\mathbb{C}^*)^2 \,\big|\,  g(z,w)=0\}$ is the image  under the linear map $^{t}L^{-1}$ of the amoeba $\mathscr{A}(V_f)$ (respectively coamoeba $co\mathscr{A}(V_f)$) of the hyperbola defined by the polynomial $f$.

 \vspace{0.2cm}
 
 % \newpage
 
 We begin by summarising our main results of this section:

\begin{theorem}\label{maintheorem}
Let $\mathcal{C}_L$ be the real plane curve with defining polynomial $g$ as above with $\tau_1=\tau_2=\tau$, and $\tau_3=1$,  where $\tau$ is a real number such that $\tau\ne \pm1$.
 Then, the amoeba $\mathscr{A}_{\mathcal{C}_L}$ of the curve $\mathcal{C}_L$  contains the origin  $\mathcal{O}\in \mathbb{R}^2$, and the number of connected components $s$ of
  the inverse image of the origin  of $\mathbb{R}^2$ with the logarithmic map is as follows:
\begin{itemize}
\item[(i)]\, If  $|\tau|>1$, then $s$ is equal to the integer length  of the diagonal $\Big[\left(\begin{array}{c} a\\ c\end{array}\right), \left(\begin{array}{c} b\\ d\end{array}\right)\Big]$ of the Newton polygon of $g$,  and   $\Log^{-1}(\mathcal{O})$ is  a link of $s$ torus knots. Moreover, 
$$ 
s = \textrm{intl}_\mathbb{Z}(\Big[\left(\begin{array}{c} a\\ c\end{array}\right), \left(\begin{array}{c} b\\ d\end{array}\right)\Big])= \Big|gcd(b-a,d-c)\Big|,
$$
%where $\textrm{intl}_\mathbb{Z}(I)$ denotes the integer length of the segment $I$, and $gcd(m.n)$ denotes the greatest common divisor of the integer numbers $m$ and $n$.
\item[(ii)]\, if $0<|\tau|<1$, then $s$ is equal to the integer length  of the second diagonal\\
$\Big[\left(\begin{array}{c} 0\\ 0\end{array}\right), \left(\begin{array}{c} a+b\\ c+d\end{array}\right)\Big]$ of the Newton polygon of $g$,  and   $\Log^{-1}(\mathcal{O})$ is  a link of $s$ torus knots,  where $s$ is as follows:
$$ 
s = \textrm{intl}_\mathbb{Z}(\Big[\left(\begin{array}{c} a+b\\ 0\end{array}\right), \left(\begin{array}{c} 0\\ c+d \end{array}\right)\Big])= \Big|gcd(b+a,c+d)\Big|.
$$
\end{itemize}
\end{theorem}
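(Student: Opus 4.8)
The plan is to reduce everything to the single square polynomial $f(z,w)=1+\tau z+\tau w+zw$ and then transport the answer through the monomial substitution $g=f_L$. Since $g(z,w)=f(z^aw^c,z^bw^d)$, we may write $g=f\circ\bar\psi$, where $\bar\psi(z,w)=(z^aw^c,z^bw^d)$ is the endomorphism of $(\mathbb C^*)^2$ whose angular part is the $|\det L|$-fold torus covering induced by the matrix ${}^tL$; this is exactly the $f_L$ construction recalled at the end of Section~2, and $\bar\psi$ intertwines $\Log$ with ${}^tL$ as in the commutative diagram preceding Lemma~\ref{lem1}. Because ${}^tL$ fixes the origin (${}^tL\,\mathcal O=\mathcal O$), both assertions of the theorem---that $\mathcal O\in\mathscr A_g$, and the count of connected components of $\Log^{-1}(\mathcal O)\cap V_g$---will follow from the corresponding facts for $f$ together with a covering-space count.

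The heart of the matter is the base curve. First I would solve $f=0$ for $w$, obtaining the M\"obius function $w=M(z)=-\dfrac{1+\tau z}{\tau+z}$, whose only pole $z=-\tau$ and only zero $z=-1/\tau$ have modulus $\tau\neq1$ and hence lie off the unit circle. The key Lee--Yang type identity is that for $|z|=1$ one has $|1+\tau z|^2=1+2\tau\Ree z+\tau^2=|\tau+z|^2$, so $|z|=1$ forces $|w|=|M(z)|=1$ automatically. Consequently $M|_{S^1}$ is a self-homeomorphism of $S^1$ and $V_f\cap(S^1)^2$ is its graph, a single embedded circle $C$; in particular $\mathcal O=\Log(z,M(z))\in\mathscr A_f$, whence $\mathcal O\in\mathscr A_g$. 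This degeneracy---an entire circle of the curve collapsing onto the single amoeba point $\mathcal O$---is precisely what makes the origin the distinguished point of the statement.

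Next I would determine the homology class of $C$ in $H_1((S^1)^2)=\mathbb Z^2$ by reading off the winding number of $M|_{S^1}$. As $z$ runs once around $S^1$, the number $1+\tau z$ traces the circle of radius $\tau$ about $1$ and $\tau+z$ traces the circle of radius $1$ about $\tau$; the origin lies in the interior of exactly one of these according as $\tau>1$ or $0<\tau<1$. Hence $\arg M(e^{i\alpha})$ increases by $+2\pi$ when $\tau>1$ and by $-2\pi$ when $0<\tau<1$, so $[C]=(1,\varepsilon)$ with $\varepsilon=+1$ in the anti-ferromagnetic case and $\varepsilon=-1$ in the ferromagnetic case. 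This sign is exactly what separates case~(i) from case~(ii).

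Finally I would count the components of $\Log^{-1}(\mathcal O)\cap V_g=\bar\psi^{-1}(C)$, the preimage of $C$ under the torus covering with angular matrix ${}^tL$. Writing $C=\{x:\langle w_0,x\rangle\equiv0\}$ for the primitive covector $w_0=(\varepsilon,-1)$ dual to the class $(1,\varepsilon)$, the preimage is cut out by $\langle w_0,{}^tL\,x\rangle=\langle L\,w_0,x\rangle\equiv0$, and it splits into exactly $m$ parallel circles, where $m$ is the gcd of the entries of the integer covector $L\,w_0=(a\varepsilon-b,\;c\varepsilon-d)$. Setting $\varepsilon=+1$ gives $s=\gcd(b-a,d-c)$, the integer length of the anti-diagonal $[(a,c),(b,d)]$ of the Newton parallelogram; setting $\varepsilon=-1$ gives $s=\gcd(a+b,c+d)$, the integer length of the main diagonal $[(0,0),(a+b,c+d)]$. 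Each of the $s$ components is an embedded primitive circle lying in a common homology class on the standard torus $(S^1)^2\cong\Log^{-1}(\mathcal O)$, i.e.\ a torus knot, and the $s$ parallel copies assemble into a torus link, as claimed. The step I expect to require the most care is pinning down the sign $\varepsilon$ uniformly---and hence which diagonal appears---from the winding-number analysis, since the split into the two cases and their two diagonals hinges entirely on it.
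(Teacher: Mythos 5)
Your proposal is correct and follows essentially the same route as the paper's (implicitly assembled) proof: establish the unit-square base case for $f(z,w)=1+\tau z+\tau w+zw$ via the explicit solution $w=-\frac{1+\tau z}{\tau+z}$, whose restriction to $|z|=1$ parametrizes $V_f\cap\Log^{-1}(\mathcal{O})$ as a single circle, and then transport the count through the monomial substitution $g=f_L$, exactly as the paper does in its Proposition and in the Corollary's appeal to Lemma~\ref{lem1}. The only difference is one of rigor rather than strategy: where the paper distinguishes the cases $\tau>1$ and $0<\tau<1$ by inspecting figures and asserts that the general count "follows immediately," you make these steps precise via the Lee--Yang modulus identity $|1+\tau z|=|\tau+z|$ on $|z|=1$, the winding-number computation fixing the homology class $(1,\pm 1)$, and the $\gcd$ count of components of the preimage of a primitive circle under the degree-$|\det L|$ torus covering.
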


\vspace{0.1cm}

\begin{proposition}
Let $\mathcal{C}_f$ be the real plane curve with defining polynomial $f$ as above with $\tau_1=\tau_2=\tau$, and $\tau_3=1$,  where $\tau$ is a  real number such that $\tau\ne \pm 1$ (i.e. the Newton polygon of $f$ is the unit square).
 Then, the amoeba $\mathscr{A}_{\mathcal{C}_f}$ of the curve $\mathcal{C}_f$ contains the origin  $\mathcal{O}\in \mathbb{R}^2$, and  
  the inverse image of the origin with the logarithmic map is homeomorphic to the  unknotted circle (i.e. a trivial knot) in $\Log^{-1}(\mathcal{O}) = S^1\times S^1$ isotopic to the diagonal.
\end{proposition}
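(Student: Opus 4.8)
The plan is to solve the defining equation for $w$ as a function of $z$ on the relevant torus and to recognize the resulting map as a modulus-preserving Möbius transformation. A point of $V_f$ lies in $\Log^{-1}(\mathcal{O})$ precisely when $|z|=|w|=1$. I would first record that on the circle $|z|=1$ the factor $\tau+z$ never vanishes: since $\tau>0$ and $\tau\neq 1$, its only zero $z=-\tau$ has modulus $\tau\neq 1$. Hence $1+\tau z+\tau w+zw=0$ solves uniquely for $w$,
$$
w=\psi(z):=-\frac{1+\tau z}{\tau+z},
$$
so that $\Log^{-1}(\mathcal{O})\cap V_f$ is contained in the graph of $\psi$ over $\{|z|=1\}$.

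The key step is to compute $|\psi(z)|$ for $|z|=1$. Writing $z=e^{i\theta}$ and using $z\bar z=1$, one finds $|1+\tau z|^2=1+2\tau\cos\theta+\tau^2$ and $|\tau+z|^2=\tau^2+2\tau\cos\theta+1$, which coincide; therefore $|\psi(z)|=1$ identically on the unit circle. This is exactly the Blaschke phenomenon that for real $\tau$ the transformation $\psi$ preserves $S^1$. Consequently every $z$ with $|z|=1$ produces a genuine solution $(z,\psi(z))\in V_f$ with $|z|=|\psi(z)|=1$, and conversely every such solution arises this way. In particular the solution set is nonempty, which already shows $\mathcal{O}\in\mathscr{A}_f$.

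It then remains to read off the topology. The solution set equals the graph $\{(z,\psi(z)):|z|=1\}$, and the map $S^1\to(\mathbb{C}^*)^2$, $z\mapsto(z,\psi(z))$, is a continuous injection (the first coordinate recovers $z$) from a compact space into a Hausdorff space, hence a homeomorphism onto its image. Thus $\Log^{-1}(\mathcal{O})$ is homeomorphic to a circle, which is the assertion. This is also consistent with the preceding theorem specialized to the unit square, where $L$ is the identity and $s=|\gcd|=1$, predicting a single circle.

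I do not expect a genuine obstacle here: once the defining equation is solved for $w$, the whole statement collapses to the single identity $|1+\tau z|=|\tau+z|$ on $|z|=1$. The only points demanding care are the nonvanishing of $\tau+z$ (which is precisely where the hypothesis $\tau\neq 1$ enters) and the upgrade from continuous bijection to homeomorphism, both routine. If anything is delicate, it is merely to note explicitly that no solutions are lost in the degenerate case $\tau+z=0$, which cannot occur on the unit circle under our assumption.
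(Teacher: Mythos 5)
Your proof is correct, and it is actually more complete than the argument the paper gives. Both start identically, by solving the defining equation for $w$ as the M\"obius map $\psi(z)=-\frac{1+\tau z}{\tau+z}$; but from that point the paper proceeds pictorially: it fixes $0<\tau<1$, displays the real part, the amoeba, and the coamoeba of $\mathcal{C}$, and simply reads off from the coamoeba figure that the intersection of $\mathcal{C}$ with the fiber $\Log^{-1}(\mathcal{O})=(S^1)^2$ is a union of two arcs forming a topological circle. You instead isolate the actual mechanism: the identity $|1+\tau z|^2=1+2\tau\cos\theta+\tau^2=|\tau+z|^2$ on $|z|=1$ (valid precisely because $\tau$ is real), which shows $\psi$ preserves the unit circle, so the fiber intersection is exactly the graph $\{(z,\psi(z)):|z|=1\}$; together with the nonvanishing of $\tau+z$ on $|z|=1$ (where $\tau\neq 1$ enters) and the compact-to-Hausdorff upgrade, this yields the homeomorphism with $S^1$. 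Your route buys several things the paper's does not: it is uniform in $\tau$ (covering $0<\tau<1$ and $\tau>1$ simultaneously, where the paper's figures only address the first case), it verifies the converse inclusion that no solutions on the torus are missed, and it replaces a figure-dependent assertion with a two-line computation. What the paper's presentation conveys that yours omits is the explicit geometry of the coamoeba in the fundamental domain (the two arcs and shaded triangles), which is what the subsequent examples and the torus-knot corollary visually build upon; but as a proof of the proposition itself, yours is the rigorous one.
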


%\newpage

\begin{proof}

Let $f$ be the polynomial defined by:
$$
f(z,w) = 1 + \tau z +  \tau w + zw,
$$
where $\tau$ is a real number different than $\pm 1$. The curve $\mathcal{C}_f$ with defining polynomial $f$ can be seen as the set of points satisfying the following:
$$
w = - \frac{1+\tau z}{\tau + z} = -\frac{\tau^2(\frac{1}{\tau^2} +\frac{z}{\tau})}{\tau (1+\frac{z}{\tau})} = -\tau \Big(\frac{\frac{1}{\tau^2} +\frac{z}{\tau}}{1+\frac{z}{\tau}} \Big).
$$
Thus,
$$
\frac{w}{\tau} = -  \Big(\frac{\frac{1}{\tau^2} +\frac{z}{\tau}}{1+\frac{z}{\tau}} \Big).
$$
After the change of variable given by $\di \big(\frac{z}{\tau}, \frac{w}{\tau}\big)=(X,Y)$,  we get:
$$
Y = - \frac{a+Y}{1+X},
$$
where $\di a=\frac{1}{\tau^2}$. This is the curve $\mathcal{C}_g$ with defining polynomial $\di g(X,Y)= \frac{1}{\tau^2} +X+Y+XY$.
The amoeba of the curve $\mathcal{C}_f$ is the translation of the amoeba of the curve $\mathcal{C}_g$   by the vector $(\log (\tau), \log(\tau))$.
%
%
%
%
%
%
%
%The set of zero locus of $g$ satisfies the following:
%$$
%\di  Y =-\frac{\frac{1}{\tau^2} + X}{1 + X}.
%$$

The amoeba and the coamoeba of the curve $\mathcal{C}_g$ with $0<|\tau|<1$ are given as in the figures 4 and   \ref{fig_coamoeba-A}, respectively. Moreover, the point $(-\log (\tau), -\log(\tau))\in \mathbb{R}^2$ is a critical value of the logarithmic map restricted to   the curve $\mathcal{C}_g$ called a pinching point by  Mikhalkin (Remark 10,  \cite{M1-00}). This is equivalent to saying that the origin of $\mathbb{R}^2$ is contained in the amoeba of the curve $\mathcal{C}_f$ as  a pinching point. For more details about this point, we  can see for example \cite{PT-05}.

Thus, the amoeba of $\mathcal{C}_f$ has a critical value at the origin $\mathcal{O}$, which is a pinching point such that its inverse image  by the logarithmic map in $\mathcal{C}_f$ is a circle contained in the real torus $\Log^{-1}(\mathcal{O}) = S^1\times S^1$. In other words, $\mathcal{C}_f\,\cap\,\Log^{-1}(\mathcal{O})$ is a circle. Moreover, the argument map restricted to $\delta = \mathcal{C}_f\,\cap\,\Log^{-1}(\mathcal{O})$ is an homeomorphism into  its image, and its  homology class in $H_1(S^1\times S^1, \mathbb{Z}) \cong \mathbb{Z}\oplus \mathbb{Z}$ is equal to $\left(\begin{array}{c} 1\\ 1\end{array}\right)$ or $\left(\begin{array}{c} -1\\ -1\end{array}\right)$, it depends on the orientation tooken. In fact,  $\Arg (\delta )$ is the union of the two arcs, which is a cycle of homology 
$\left(\begin{array}{c} 1\\ 1\end{array}\right)$ or $\left(\begin{array}{c} -1\\ -1\end{array}\right)$
(see Figure 4).

\end{proof}

\vspace{-0.9cm}

\begin{figure}[H]
 \centering 
  \begin{subfigure}[b]{0.3\textwidth}
    \centering
    \includegraphics[width=0.8\textwidth]{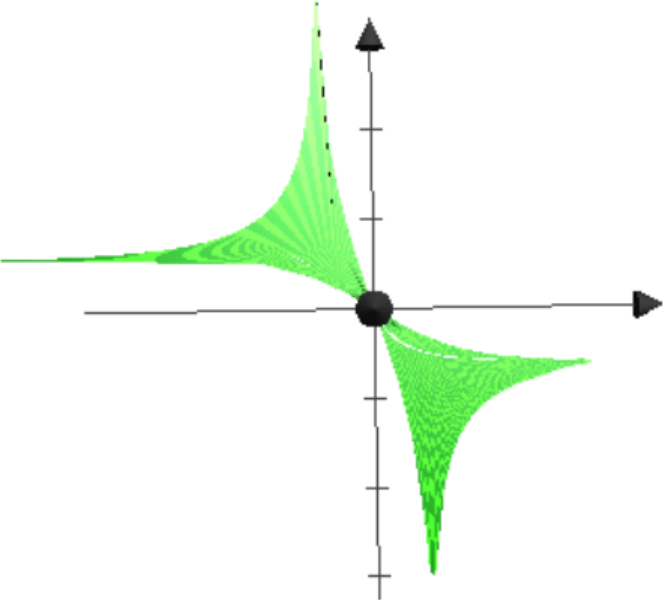}
    \caption{}
    \label{fig_Pinch-Amoeba-A}
  \end{subfigure}
  \begin{subfigure}[b]{0.4\textwidth}
    \centering
    \includegraphics[width=0.5\textwidth]{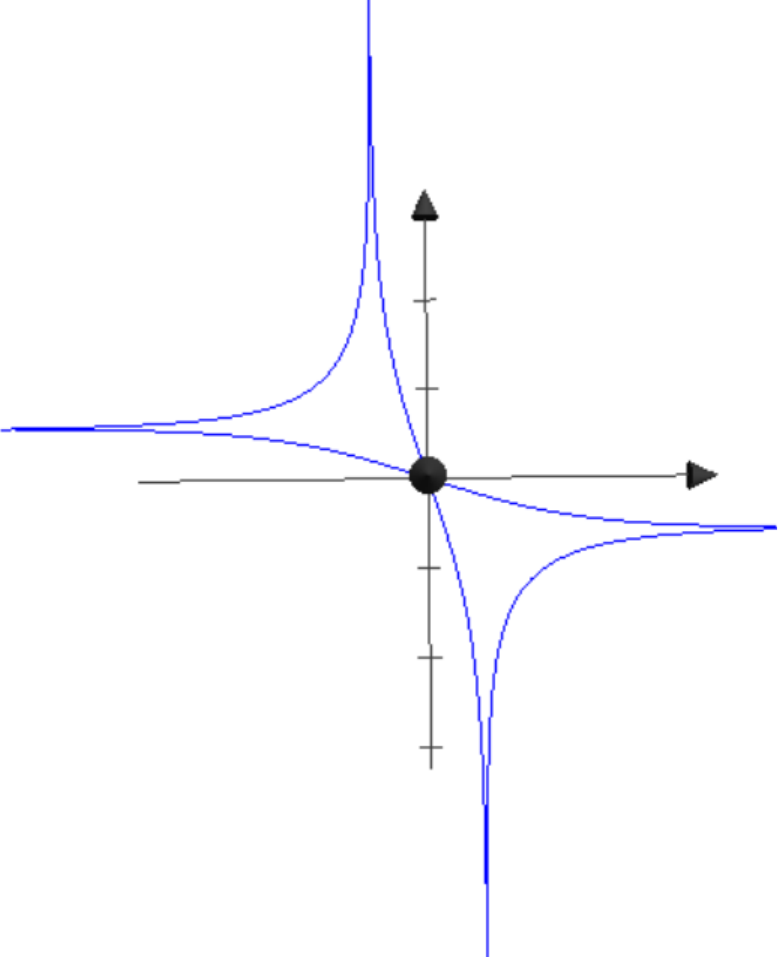}
    \caption{}
    \label{fig_Pinch-Amoeba-AR-1}
  \end{subfigure}
  \caption{(A) Complex amoeba of the  hyperbola defined by $f$ with $0<|\tau|<1$. (B) Image  under the logarithmic map of the real part of the  hyperbola defined by $f$  with $0<|\tau|<1$.}
  \label{fig_Pinch}
\end{figure}
\begin{figure}[H]
 \centering 
  \begin{tikzpicture}[scale=0.21]
        \draw (-6,-6) grid (6,6);

        \draw[line width=0.4mm,green] (-6,0) -- (6,0);
        \draw[line width=0.4mm,black] (0,-6) -- (0,6);  % [-{Stealth[red]}]

        %\draw (-2,0) -- (2,0);
        \filldraw [gray] (0,0) circle (4pt);
        \draw[line width=0.7mm,red]  (0,6) .. controls (0.5,0.5) .. (6,0);
        \draw[line width=0.7mm,red]  (-6,0) .. controls (-0.5,-0.5) .. (0,-6);   
                %\draw (-6,0) .. controls (-1,0) and (1,0) .. (0,-6);
        \shade[top color=yellow, bottom color=black] (0,0) -- (6,0) .. controls (0.5,0.5) .. (0,6) -- cycle;
        \shade[top color=yellow, bottom color=black] (0,0) -- (-6,0) .. controls (-0.5,-0.5) .. (0,-6) -- cycle;
    \end{tikzpicture}
    \caption{Coamoeba  of the curve $\mathcal{C}_f$ with defining polynomial $f(z,w)=1+\tau z+\tau w + zw$ with $0<|\tau|<1$ in the fundamental domain $[0, 2\pi]\times[0, 2\pi]$ of the real torus $\Log^{-1}(\mathcal{O})=S^1\times S^1$. The intersection of the curve $\mathcal{C}_f$ with $\Log^{-1}(\mathcal{O})$ is a topological circle $\delta$, and the union of the two red  arcs represent the circle image of $\delta$ under the argument map $\Arg(\delta)$.}
    \label{fig_coamoeba-A}
\end{figure}

\vspace{-0.3cm}

We have two possible orientations of the circle  $\Arg(\delta)$ as we can see in the following figures:
\begin{figure}[H]
 \centering 
  \begin{subfigure}[b]{0.4\textwidth}
    \centering
    \includegraphics[width=0.4\textwidth]{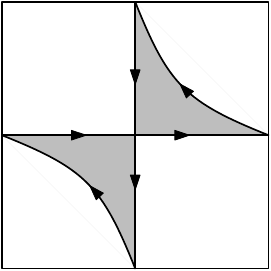}
    \caption{}
    \label{fig_coamoeba.tau.less.1a}
  \end{subfigure}
  \begin{subfigure}[b]{0.4\textwidth}
    \centering
    \includegraphics[width=0.4\textwidth]{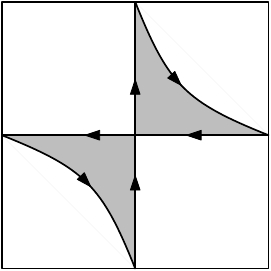}
    \caption{}
    \label{fig_Pinch-Amoeba-AR-1}
  \end{subfigure}
  \caption{(A) Coamoeba of the  hyperbola defined by $f$ with $0<|\tau|<1$, and  the class of the 1-cycle $\Arg(\delta)$ is $[\Arg(\delta)]= \left(\begin{array}{c} -1\\ 1\end{array}\right)$ in $H_1(S^1\times S^1,\mathbb{Z})$.
  (B) Coamoeba of the  hyperbola defined by $f$ with $0<|\tau|<1$, and  the class of the 1-cycle $\Arg(\delta)$ is $[\Arg(\delta)]= \left(\begin{array}{c} 1\\ -1\end{array}\right)$ in $H_1(S^1\times S^1,\mathbb{Z})$.}
  \label{fig_Pinch}
\end{figure}

 \vspace{0.0cm}
In the case where $|\tau| >1$, the amoeba and the coamoeba of the curve defined by $f$ are as shown in figures 7,  8 and 9:

% \vspace{-0.45cm}
%
%
%
%
%
%
%

\begin{figure}[H]
 \centering 
  \begin{subfigure}[b]{0.47\textwidth}
    \centering
    \includegraphics[width=1.7\textwidth]{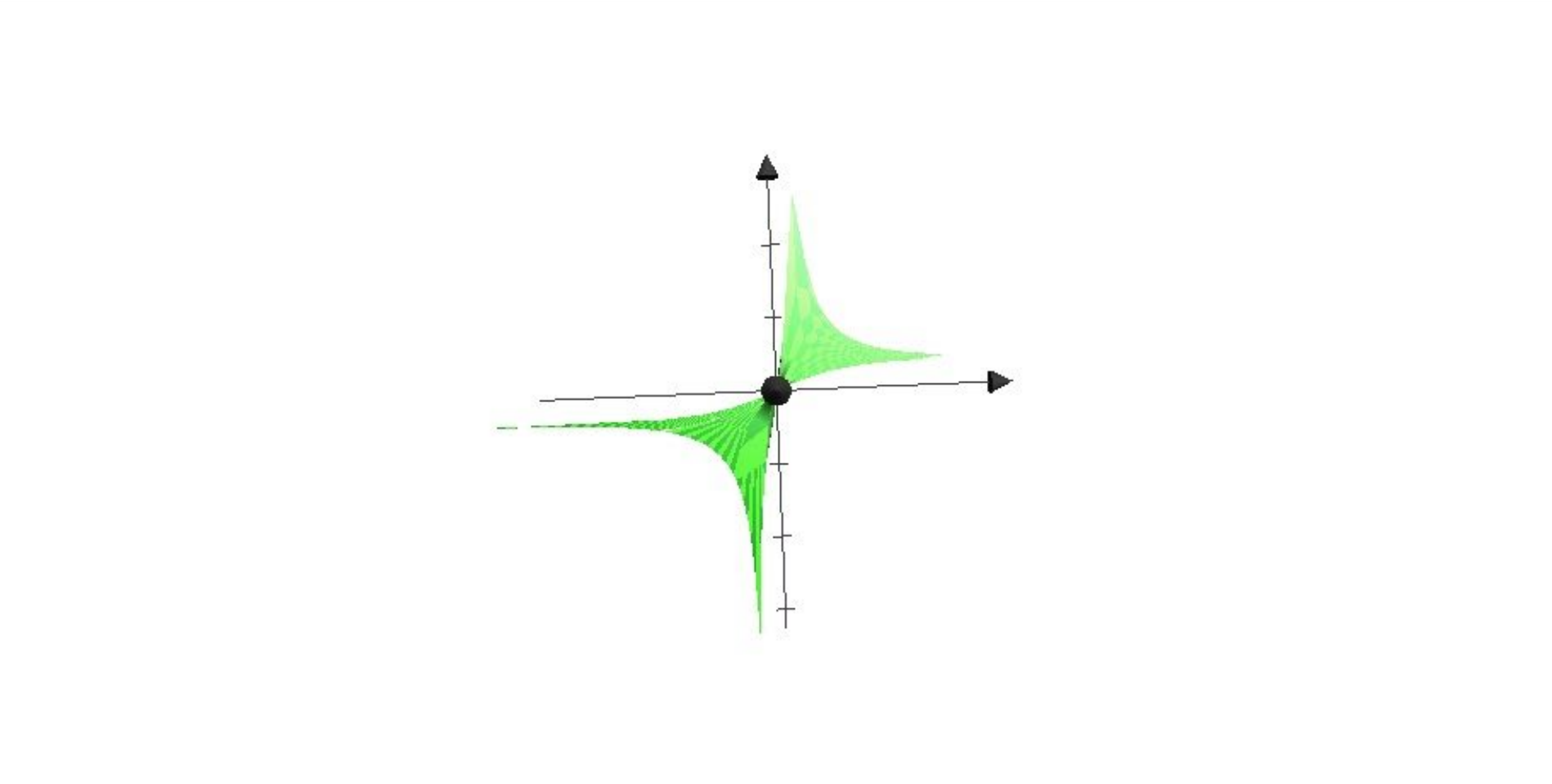}
    \caption{}
    \label{fig_Pinch-Amoeba-A}
  \end{subfigure}
  \begin{subfigure}[b]{0.5\textwidth}
    \centering
    \includegraphics[width=1.5\textwidth]{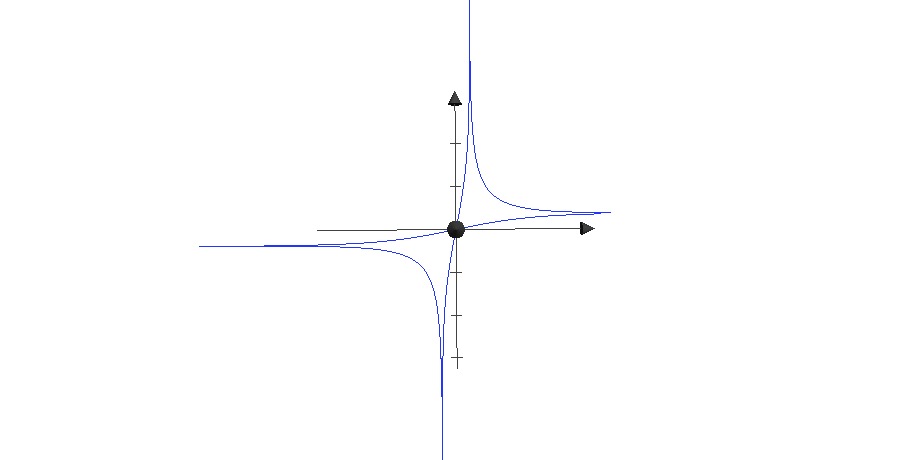}
    \caption{}
    \label{fig_Pinch-Amoeba-AR-1}
  \end{subfigure}
  \caption{(A) Complex amoeba of the  hyperbola defined by $f$ with $|\tau|>1$. (B) Image  under the logarithmic map of the real part of the  hyperbola defined by $f$ with $|\tau|>1$.}
  \label{fig_Pinch2}
\end{figure}
\vspace{-0.0cm}

\begin{figure}[H]
\begin{center}
 \centering 
  \begin{tikzpicture}[scale=0.21]
        \draw (-6,-6) grid (6,6);
 \draw[line width=0.4mm,green] (-6,0) -- (6,0);
        \draw[line width=0.4mm,black] (0,-6) -- (0,6);

        %\draw (-2,0) -- (2,0);
        \filldraw [gray] (0,0) circle (4pt);
         \draw[line width=0.7mm,red] (6,0) .. controls (0.5,-0.5) .. (0,-6);
         \draw[line width=0.7mm,red] (-6,0) .. controls (-0.5,0.5) .. (0,6);
        %\draw (-6,0) .. controls (-1,0) and (1,0) .. (0,-6);
        \shade[top color=yellow, bottom color=black] (0,0) -- (-6,0) .. controls (-0.5,0.5) .. (0,6) -- cycle;
        \shade[top color=yellow, bottom color=black] (0,0) -- (0,-6) .. controls (0.5,-0.5) .. (6,0) -- cycle;
    \end{tikzpicture}
    \caption{Coamoeba  of the curve $\mathcal{C}_f$ with defining polynomial $f(z,w)=1+\tau z+\tau w + zw$ with $|\tau|>1$ in the fundamental domain $[0, 2\pi]\times[0, 2\pi]$ of the real torus $\Log^{-1}(\mathcal{O})=(S^1)^2$. The intersection of the curve $\mathcal{C}_f$ with $\Log^{-1}(\mathcal{O})$ is a topological circle $\delta$, and the union of the two red arcs represent the circle image of $\delta$ under the argument map $\Arg(\delta)$.} 
    \label{c}
\end{center}
\end{figure} 
Similarly to the first case, we have two possible orientations of the circle  $\Arg(\delta)$ as we can see in the following figures:
\begin{figure}[H]
 \centering 
  \begin{subfigure}[b]{0.4\textwidth}
    \centering
    \includegraphics[width=0.4\textwidth]{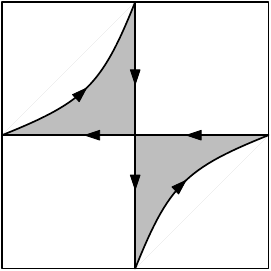}
    \caption{}
    \label{fig_coamoeba.tau.less.1a}
  \end{subfigure}
  \begin{subfigure}[b]{0.4\textwidth}
    \centering
    \includegraphics[width=0.4\textwidth]{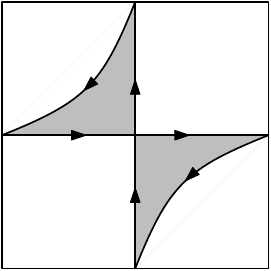}
    \caption{}
    \label{fig_Pinch-Amoeba-AR-1}
  \end{subfigure}
  \caption{(A) Coamoeba of the  hyperbola defined by $f$ with $|\tau |>1$, and the class of the 1-cycle $\Arg(\delta)$ is $[\Arg(\delta)]= \left(\begin{array}{c} 1\\ 1\end{array}\right)$ in $H_1(S^1\times S^1,\mathbb{Z})$.
  (B) Coamoeba of the  hyperbola defined by $f$ with $|\tau| >1$, and the class of the 1-cycle $\Arg(\delta)$ is  $[\Arg(\delta)]= \left(\begin{array}{c} -1\\ -1\end{array}\right)$ in $H_1(S^1\times S^1,\mathbb{Z})$.}
  \label{fig_Pinch}
\end{figure}

%
%
%
%
%
%
%
%
 
%\vspace{-0.5cm}

{\it Proof of Theorem 4.5.} First, consider the case when $|\tau|>1$. By hypothesis the polynomial $g$ is equal to $f_L$, and  by Lemma 2.1,  the amoeba (resp.  coamoeba) of the curve $\mathcal{C}_g$ is the image under the linear transformation $^{t}L^{-1}$ of the amoeba (resp. coamoeba) of the hyperbola with defining polynomial $f$ as in Example 3.6. This induces an homomorphism of the homology group $H_1(S^1\times S^1, \mathbb{Z})$ to itself given by:
%\vspace{-0.10cm}
$$
^{t}\textrm{Adj}(L) : H_1(S^1\times S^1, \mathbb{Z}) \longrightarrow H_1(S^1\times S^1, \mathbb{Z}),
$$
\vspace{-0.1cm}
where $^{t}\textrm{Adj}(L)$ is the transpose of the adjoint matrix of $L$ which is equal to $\di 
^{t}\textrm{Adj}(L) = \left(\begin{array}{cc} d&-c\\ -b&a\end{array}\right).
$
Let $\delta = \mathcal{C}_f\cap \Log^{-1}(\mathcal{O})$ be the circle representing the set of critical points of the logarithmic map over the origin.  By Corollary 3.2, $\delta$ is contained in the set of critical points of the argument map. 
The class of the image of the 1-cycle $\delta$ under the argument map  $[\Arg (\delta)]= \left(\begin{array}{c} 1\\ 1\end{array}\right)$ or $[\Arg (\delta)] = \left(\begin{array}{c} -1\\ -1\end{array}\right)$ in the coamoeba of the hyperbola  is precisely the one with homology $\left(\begin{array}{c} d-c\\ a-b\end{array}\right)$ or $\left(\begin{array}{c} -d+c\\ -a+b\end{array}\right)$. The integer length of $\left(\begin{array}{c} d-c\\ a-b\end{array}\right)$ is equal the greatest common divisor of $d-c$ and $a-b$. 

\noindent The proof in the case where $0<|\tau|<1$ is done in the same way using the hyperbola in Proposition 4.6. In this case  the   homology class of $[\Arg (\delta )]$ in $H_1(S^1\times S^1, \mathbb{Z}) \cong \mathbb{Z}\oplus \mathbb{Z}$ is equal to $\left(\begin{array}{c} -1\\ 1\end{array}\right)$ or $\left(\begin{array}{c} 1\\ -1\end{array}\right)$.

\begin{figure}[H]
 \centering 
  \begin{subfigure}[b]{0.49\textwidth}
    \centering
    \includegraphics[width=0.5\textwidth]{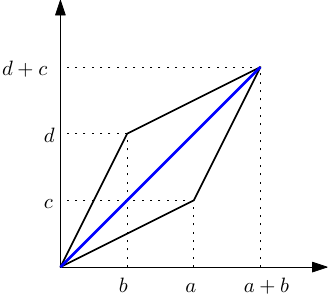}
    \caption{}
    \label{fig_coamoeba.tau.less.1a}
  \end{subfigure}
  \begin{subfigure}[b]{0.5\textwidth}
    \centering
    \includegraphics[width=0.5\textwidth]{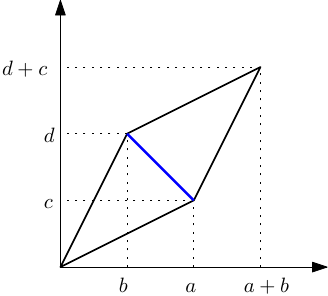}
    \caption{}
    \label{fig_Pinch-Amoeba-AR-1}
  \end{subfigure}
  \caption{(A) The diagonal of the Newton polygon of the defining polynomial of $f$ when $0<|\tau| <1$. (B) The diagonal of the Newton polygon of the defining polynomial of $f$ when $|\tau| >1$.}
  \label{fig_Pinch}
\end{figure}

\vspace{0.3cm}

\begin{corollary}
Any toric link can be realized as the one-dimensional connected components of the critical values of the argument map restricted to a complex  algebraic plane curve.
\end{corollary}

%\newpage

\begin{example} Here are two examples of knots in the coamoebas of a complex algebraic plane curves. 
%({\color{blue}{describe the corresponding matrices from Lemme 1. The first knot is the trivial knot and the second is not trivial}})
Using Lemma 2.1, in Figure 11. (A), respectively (B), we draw in red the image of the cycle $\Arg(\delta)$ by the matrix $^{t}L_f^{-1}$, respectively $^{t}L_g^{-1}$, where $L_f$  and $L_g$ are the matrices given by:
$$
L_f = \left(\begin{array}{cc} 2&2\\ 2&3\end{array}\right), \,\textrm{and}\quad L_g = \left(\begin{array}{cc} 3&1\\ 3&2\end{array}\right).
$$

\vspace{0.1cm}

 \begin{figure}[H]
   \begin{subfigure}[b]{0.49\textwidth}
    \centering
    \begin{tikzpicture}[scale=0.25]
        \draw (-6,-6) grid (6,6);

        \draw[line width=0.4mm,green] (-6,-4) -- (-3,-6);
        \draw[line width=0.4mm,green] (-6,0) -- (3,-6);
        \draw[line width=0.4mm,green] (-6,4) -- (6,-4);
        \draw[line width=0.4mm,green] (-3,6) -- (6,0);
        \draw[line width=0.4mm,green] (3,6) -- (6,4);

        \draw[line width=0.4mm,black] (-6,-3) -- (-3,-6);
        \draw[line width=0.4mm,black] (-6,3) -- (3,-6);
        \draw[line width=0.4mm,black] (-3,6) -- (6,-3);
        \draw[line width=0.4mm,black] (3,6) -- (6,3);

        %\draw[line width=0.4mm,black] (-6,0) -- (6,0);
        %\draw (-2,0) -- (2,0);
        %\filldraw [gray] (0,0) circle (4pt);
        \draw[line width=0.7mm,red]  (-6,0) .. controls (-4.5,-0.5) .. (-3,0);
        \draw[line width=0.7mm,red]  (-3,0) .. controls (-1.5,0.5) .. (0,0);

        \draw[line width=0.7mm,red]  (0,0) .. controls (1.5,-0.5) .. (3,0);
        \draw[line width=0.7mm,red]  (3,0) .. controls (4.5,0.5) .. (6,0);
 
        %\shade[top color=yellow, bottom color=black] (-6,0) -- (3,-6) -- controls (-4.5,-0.5) -- (-3,0) -- cycle;
        % \shade[top color=yellow, bottom color=black] (3,-6) -- (-3,0) -- controls (-4.5,-0.5) -- (-6,0) -- cycle;
        \shade[top color=yellow, bottom color=black] (-6,-4) -- (-3,-6) -- (-6,-3) -- cycle;
        \shade[top color=yellow, bottom color=black] (3,-6) -- (-3,0) .. controls (-4.5,-0.5) .. (-6,0) -- cycle;
  
        \shade[top color=yellow, bottom color=black] (-6,3) -- (-6,4) -- (0,0) .. controls (-1.5,0.5) .. (-3,0) -- cycle; 
   
        \shade[top color=yellow, bottom color=black] (6,-4) -- (6,-3) -- (3,0) .. controls (1.5,-0.5) .. (0,0) -- cycle; 
    
        \shade[top color=yellow, bottom color=black] (3,0) .. controls (4.5,0.5) .. (6,0)  -- (-3,6 )-- cycle; 
        
        \shade[top color=yellow, bottom color=black] (6,3) -- (6,4) -- (3,6) -- cycle;

        %\draw (-,6) .. controls (0.5,0.5) .. (6,0);
        %\draw (-6,0) .. controls (-0.5,-0.5) .. (0,-6);

        %\draw (-6,0) .. controls (-1,0) and (1,0) .. (0,-6);
        %\shade[top color=yellow, bottom color=black] (0,0) -- (6,0) .. controls (0.5,0.5) .. (0,6) -- cycle;
        %\shade[top color=yellow, bottom color=black] (0,0) -- (-6,0) .. controls (-0.5,-0.5) .. (0,-6) -- cycle;
\end{tikzpicture}
    \caption{$f(z,w)=1+\tau z^2w^2+\tau z^2w^3 + z^4w^5$.}
    \label{fig_2223}
  \end{subfigure}
  \begin{subfigure}[b]{0.49\textwidth}
  \centering
  \begin{tikzpicture}[scale=0.25]
    \draw (-6,-6) grid (6,6);

    %\draw[line width=0.4mm,black] (-6,0) -- (-3,-6);
    %\draw[line width=0.4mm,black] (-3,6) -- (3,-6);      % Diagonal
    %\draw[line width=0.4mm,black] (3,6) -- (6,0);  

    \draw[line width=0.4mm,black] (-6,-4) -- (-4,-6);  
    \draw[line width=0.4mm,black] (6,-4) -- (-4,6);  
    \draw[line width=0.4mm,black] (-6,0) -- (0,-6);  
    \draw[line width=0.4mm,black] (0,6) -- (6,0);   
    \draw[line width=0.4mm,black] (-6,4) -- (4,-6);  
    \draw[line width=0.4mm,black] (4,6) -- (6,4);  

    \draw[line width=0.4mm,green] (-6,-3) -- (0,-6); 
    \draw[line width=0.4mm,green] (0,6) -- (6,3); 
    \draw[line width=0.4mm,green] (-6,3) -- (6,-3);

    \draw[line width=0.7mm,red]  (-2,4) .. controls (-0.2,1.7) .. (0,0);

    \draw[line width=0.7mm,red]  (2,-4) .. controls (0.2,-1.7) .. (0,0);

    \draw[line width=0.7mm,red]  (2,-4) .. controls (3.2,-5.4) .. (3.4,-6);

    \draw[line width=0.7mm,red]  (4,4) .. controls (3.8,5.1) .. (3.4,6);

    \draw[line width=0.7mm,red]  (4,4) .. controls (4.2,2.5) .. (6,0);

    \draw[line width=0.7mm,red]  (-4,-4) .. controls (-3.8,-5.1) .. (-3.4,-6);

    \draw[line width=0.7mm,red]  (-2,4) .. controls (-3.2,5.4) .. (-3.4,6);

    \draw[line width=0.7mm,red]  (-4,-4) .. controls (-4.2,-2.5) .. (-6,0);
 
    %\draw (-,6) .. controls (0.5,0.5) .. (6,0);
    %\draw (-6,0) .. controls (-0.5,-0.5) .. (0,-6);

    \shade[top color=yellow, bottom color=black] (0,-6) -- (-4,-4) .. controls (-4.2,-2.5) .. (-6,0);

    \shade[top color=yellow, bottom color=black] (-4,2) -- (2,-4) .. controls (0.2,-1.7) .. (0,0);

    \shade[top color=yellow, bottom color=black] (4,-2) -- (-2,4) .. controls (-0.2,1.7) .. (0,0);

    \shade[top color=yellow, bottom color=black] (0,6) -- (4,4) .. controls (4.2,2.5) .. (6,0);

    \shade[top color=yellow, bottom color=black] (6,3) -- (6,4) -- (4,6) -- (3.4,6) .. controls (3.8,5.1) .. (4,4);

    \shade[top color=yellow, bottom color=black] (-6,-3) -- (-6,-4) -- (-4,-6) -- (-3.4,-6) .. controls (-3.8,-5.1) .. (-4,-4);

    \shade[top color=yellow, bottom color=black] (-6,4) -- (-4,2) -- (-6,3);

    \shade[top color=yellow, bottom color=black] (6,-4) -- (4,-2) -- (6,-3);

    \shade[top color=yellow, bottom color=black] (2,-4) .. controls (3.2,-5.4) .. (3.4,-6) -- (4,-6);

    \shade[top color=yellow, bottom color=black] (-2,4) .. controls (-3.2,5.4) .. (-3.4,6) -- (-4,6);
 
    %\draw (-6,0) .. controls (-1,0) and (1,0) .. (0,-6);
    %\shade[top color=yellow, bottom color=black] (0,0) -- (6,0) .. controls (0.5,0.5) .. (0,6) -- cycle;
    %\shade[top color=yellow, bottom color=black] (0,0) -- (-6,0) .. controls (-0.5,-0.5) .. (0,-6) -- cycle;
\end{tikzpicture}
  \caption{$g(z,w)=1+\tau z^3w^3+\tau zw^2 + z^4w^5$.}
    \label{fig_3312}
  \end{subfigure}
  \caption{Coamoebas  in a fundamental domain of the 2-dimensional real torus $S^1\times S^1$ of the curves with defining polynomials $f$ and $g$ respectively with $0<|\tau|<1$.}
  \label{fig_coamoeba1}
\end{figure}

\end{example}

\vspace{0.2cm}
\newpage

\begin{corollary}
Let $f_{(p,q)}$ be the polynomial defined as follows:
$$
f_{(p,q)}(z,w) = 1 +\tau z^p + \tau w^q + z^pw^q,
$$
    where $\tau$ is a real number such that $\tau\ne \pm 1$, $p$ and $q$ are positive integer numbers with $(p,q) = c$. Then, the amoeba of the curve $\mathcal{C}_{f_{(p,q)}}$ with defining polynomial $f_{(p,q)}$ contains the origin  $\mathcal{O}\in \mathbb{R}^2$, and  the inverse image of the origin with the logarithmic map is as follows:
\begin{itemize}
\item[(i)]\, If $(p,q) = 1$ (i.e. $p$ and $q$ are coprime), then $\Log^{-1}(\mathcal{O})\cap V_{f_{(p,q)}}$ is a $(p,q)$-torus knot.
\item[(ii)]\, if $(p,q) = c\ne 1$, then $\Log^{-1}(\mathcal{O})\,\cap\, V_{f_{(p,q)}}$ is a $(p,q)$-torus link with $c$ connected components.

\end{itemize}
\end{corollary}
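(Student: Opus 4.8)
The plan is to realize $f_{(p,q)}$ as the pullback of the polynomial $f(z,w)=1+\tau z+\tau w+zw$ of the preceding Proposition under the monomial map $\phi\colon(\mathbb{C}^*)^2\to(\mathbb{C}^*)^2$, $\phi(z,w)=(z^p,w^q)$, since $f_{(p,q)}(z,w)=f(z^p,w^q)$ (the coefficient of $z^pw^q$ being normalized to agree with $f$). The map $\phi$ is a surjective endomorphism of the torus whose restriction to the real torus $(S^1)^2=\Log^{-1}(\mathcal{O})$ is, in angular coordinates, the linear covering $(\theta,\psi)\mapsto(p\theta,q\psi)$ of degree $pq$. Because $\Log\circ\phi=\mathrm{diag}(p,q)\circ\Log$, the map $\phi$ fixes the origin of $\mathbb{R}^2$; hence $\mathcal{O}$ lies in $\mathscr{A}_{f_{(p,q)}}$ (as it lies in $\mathscr{A}_f$ by the Proposition) and
\[
V_{f_{(p,q)}}\cap\Log^{-1}(\mathcal{O})=\phi^{-1}\bigl(V_f\cap\Log^{-1}(\mathcal{O})\bigr)=\phi^{-1}(C),
\]
where $C:=V_f\cap(S^1)^2$ is the circle produced by the Proposition.

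First I would pin down the isotopy class of $C$ on the torus. From $w=-\tfrac{1+\tau z}{\tau+z}$ and the computation in the Proposition, $|w|=1$ whenever $|z|=1$, so $C$ is the graph of the M\"obius map $z\mapsto-\tfrac{1+\tau z}{\tau+z}$ restricted to $|z|=1$; being a M\"obius transformation it is injective, so $C$ is a simple closed curve meeting each slice $\{z=\text{const}\}$ once. Its homology class is read off from the winding number of $w$ as $z$ traverses the unit circle once: the denominator $\tau+z$ contributes one turn when $0<\tau<1$ while the numerator $1+\tau z$ contributes one turn when $\tau>1$, so in either case $w$ winds $\mp1$ and $[C]=(1,\mp1)$ is primitive. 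Thus $C$ is isotopic on $(S^1)^2$ to the straight geodesic $\theta\pm\psi\equiv\text{const}$.

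Next I would lift this straight model through the covering. The preimage $\phi^{-1}\{\theta\pm\psi\in2\pi\mathbb{Z}\}$ is the family of parallel lines $\{p\theta\pm q\psi\in2\pi\mathbb{Z}\}$ in $(S^1)^2$. Two such lines coincide in the torus exactly when their constants differ by an element of $p\mathbb{Z}+q\mathbb{Z}=c\mathbb{Z}$, so there are precisely $c=\gcd(p,q)$ of them; each is connected with primitive class $(q/c,\mp p/c)$, and each covers $C$ with degree $pq/c$ (consistent with $c\cdot\tfrac{pq}{c}=\deg\phi=pq$). Since $\phi$ is a local diffeomorphism and $C$ is isotopic to this straight model, $\phi^{-1}(C)$ is isotopic to $\phi^{-1}$ of the model; hence $V_{f_{(p,q)}}\cap\Log^{-1}(\mathcal{O})$ is a disjoint union of $c$ simple closed curves, each of class $(q/c,\mp p/c)$. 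This count also follows from the preceding Theorem with $L=\mathrm{diag}(p,q)$, for which $s=|\gcd(p,q)|=c$.

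Finally, viewing $\Log^{-1}(\mathcal{O})=(S^1)^2$ as the standard (Clifford) unknotted torus in $S^3\cong\mathbb{R}^3\cup\{\infty\}$, a simple closed curve of primitive class $(q/c,\mp p/c)$ is the $(p/c,q/c)$-torus knot, and $c$ disjoint parallel copies of it are by definition the $(p,q)$-torus link with $c$ components. This yields (i) when $c=1$ (a single $(p,q)$-torus knot) and (ii) when $c\neq1$. The step I expect to be the main obstacle is the lifting argument: one must verify that the ambient isotopy carrying $C$ to the straight geodesic lifts through $\phi$ to an ambient isotopy preserving the standardly embedded torus, so that the conclusion is an equality of knot/link \emph{types} and not merely of homology classes; likewise, tracking the orientation conventions needed to identify the class $(q/c,\mp p/c)$ with the $(p,q)$ torus link (rather than its mirror) requires care.
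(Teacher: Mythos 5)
Your proof is correct and takes essentially the same route as the paper: the paper's own proof simply observes that $f_{(p,q)} = f_D$ with $D = \mathrm{diag}(p,q)$ and invokes Lemma \ref{lem1} together with the circle from the preceding Proposition, which is exactly your pullback of $C$ through the monomial covering $\phi(z,w)=(z^p,w^q)$ — your homology-class and component-count computations just make explicit what the paper compresses into ``the result follows immediately.'' Note also that your normalization of the coefficient of $z^pw^q$ to $1$ (rather than the stated $\tau$) is the reading the paper's proof itself requires, since only then does $f_{(p,q)} = f_D$ hold.
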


%\vspace{0.3cm}
\begin{proof} 
The coamoeba when $p=q=1$ is given in Figure 4. Using Lemma \ref{lem1}, and the fact that $f_{(p,q)} = f_D$ where $D$ is the diagonal matrix with coefficients $p$ and $q$  the result follows immediately.
\end{proof}

\section{Realization of some known knots in coamoebas of algebraic curves}

 In this section, we gave an explicit representation of some known knots and links as critical values of the argument map restricted to  a complex algebraic plane curves.   This is achieved via an explicit construction of complex algebraic plane curves  by writing down their defining polynomials.

\begin{example} ${}$

\begin{itemize}
\item[(1)]\, In this example we construct the {\em Hopf Link},  the link  consisting of a linked two circles $S^1 \cup S^1$ with linking number $\pm 1$. Let $V_f$ be the curve with defining polynomial $fz,w)= 1 + \tau z^2 + \tau w^2 + zw$. The set of critical values of the argument map restricted to the curve  $V_f$ is the image of the cycle $\Arg(\delta)$ by the matrix $^{t}L^{-1}$ where $L$ is the diagonal matrix  $2\times {\textrm Id}_{2}$.

\begin{figure}[H]
 \begin{subfigure}[b]{0.49\textwidth}
    \centering
    \begin{tikzpicture}[scale=0.25]
        \draw (-6,-6) grid (6,6);

        \draw[line width=0.4mm,green] (-3,-6) -- (-3,6);
        \draw[line width=0.4mm,green] (3,-6) -- (3,6);

        \draw[line width=0.4mm,black] (-6,-3) -- (6,-3);
        \draw[line width=0.4mm,black] (-6,3) -- (6,3);

        %\draw (-2,0) -- (2,0);
        %\filldraw [gray] (0,0) circle (4pt);
        \draw[line width=0.7mm,red]  (-3,-6) .. controls (-3.5,-3.5) .. (-6,-3);
        \draw[line width=0.7mm,red] (-3,0) .. controls (-2.5,-2.5) .. (0,-3);

        \draw[line width=0.7mm,red] (-3,0) .. controls (-3.5,2.5) .. (-6,3);
        \draw[line width=0.7mm,red] (0,3) .. controls (-2.5,3.5) .. (-3,6);
        \draw[line width=0.7mm,red] (3,-6) .. controls (2.5,-3.5) .. (0,-3);
        \draw[line width=0.7mm,red] (6,-3) .. controls (3.5,-2.5) .. (3,0);
        \draw[line width=0.7mm,red] (3,0) .. controls (2.5,2.5) .. (0,3);
        \draw[line width=0.7mm,red] (6,3) .. controls (3.5,3.5) .. (3,6);
        \shade[top color=yellow, bottom color=black] (-3,-3) -- (-3,-6) .. controls (-3.5,-3.5) .. (-6,-3) -- cycle;
        \shade[top color=yellow, bottom color=black] (-3,-3) -- (-3,0) .. controls (-2.5,-2.5) .. (0,-3) -- cycle;
        \shade[top color=yellow, bottom color=black] (-3,3) -- (-3,0) .. controls (-3.5,2.5) .. (-6,3) -- cycle;
        \shade[top color=yellow, bottom color=black] (-3,3) -- (0,3) .. controls (-2.5,3.5) .. (-3,6) -- cycle;
 
        \shade[top color=yellow, bottom color=black] (3,-3) -- (3,-6) .. controls (2.5,-3.5) .. (0,-3) -- cycle;
        \shade[top color=yellow, bottom color=black] (3,-3) -- (6,-3) .. controls (3.5,-2.5) .. (3,0) -- cycle;

        \shade[top color=yellow, bottom color=black] (3,3) -- (3,0) .. controls (2.5,2.5) .. (0,3) -- cycle;
        \shade[top color=yellow, bottom color=black] (3,3) -- (6,3) .. controls (3.5,3.5) .. (3,6) -- cycle;   

        %\draw (-,6) .. controls (0.5,0.5) .. (6,0);
        %\draw (-6,0) .. controls (-0.5,-0.5) .. (0,-6);

        %\draw (-6,0) .. controls (-1,0) and (1,0) .. (0,-6);
        %\shade[top color=yellow, bottom color=black] (0,0) -- (6,0) .. controls (0.5,0.5) .. (0,6) -- cycle;
        %\shade[top color=yellow, bottom color=black] (0,0) -- (-6,0) .. controls (-0.5,-0.5) .. (0,-6) -- cycle;
    \end{tikzpicture}
    \caption{$0<|\tau|<1$.}
    \label{fig_2002a}
  \end{subfigure}
  \begin{subfigure}[b]{0.49\textwidth}
    \centering
    \begin{tikzpicture}[scale=0.25]
        \draw (-6,-6) grid (6,6);

        \draw[line width=0.4mm,green] (-3,-6) -- (-3,6);
        \draw[line width=0.4mm,green] (3,-6) -- (3,6);

        \draw[line width=0.4mm,black] (-6,-3) -- (6,-3);
        \draw[line width=0.4mm,black] (-6,3) -- (6,3);

        %\draw (-2,0) -- (2,0);
        %\filldraw [gray] (0,0) circle (4pt);
        \draw[line width=0.7mm,red]  (-3,0) .. controls (-3.5,-2.5) .. (-6,-3);
        \draw[line width=0.7mm,red] (-3,-6) .. controls (-2.5,-3.5) .. (0,-3);

        \draw[line width=0.7mm,red] (-3,0) .. controls (-2.5,2.5) .. (0,3);
        \draw[line width=0.7mm,red] (-6,3) .. controls (-3.5,3.5) .. (-3,6);

        \draw[line width=0.7mm,red]  (3,0) .. controls (2.5,-2.5) .. (0,-3);
        \draw[line width=0.7mm,red] (3,-6) .. controls (3.5,-3.5) .. (6,-3);

        \draw[line width=0.7mm,red] (3,0) .. controls (3.5,2.5) .. (6,3);
        \draw[line width=0.7mm,red] (0,3) .. controls (2.5,3.5) .. (3,6);

        \shade[top color=yellow, bottom color=black] (-3,-3) -- (-3,0) .. controls (-3.5,-2.5) .. (-6,-3) -- cycle;
        \shade[top color=yellow, bottom color=black] (-3,-3) -- (-3,-6) .. controls (-2.5,-3.5) .. (0,-3) -- cycle;

        \shade[top color=yellow, bottom color=black] (3,-3) -- (3,0) .. controls (2.5,-2.5) .. (0,-3) -- cycle;
        \shade[top color=yellow, bottom color=black] (3,-3) -- (3,-6) .. controls (3.5,-3.5) .. (6,-3) -- cycle;

        \shade[top color=yellow, bottom color=black] (-3,3) -- (-3,6) .. controls (-3.5,3.5) .. (-6,3) -- cycle;
        \shade[top color=yellow, bottom color=black] (-3,3) -- (-3,0) .. controls (-2.5,2.5) .. (0,3) -- cycle;

        \shade[top color=yellow, bottom color=black] (3,3) -- (3,6) .. controls (2.5,3.5) .. (0,3) -- cycle;
        \shade[top color=yellow, bottom color=black] (3,3) -- (3,0) .. controls (3.5,2.5) .. (6,3) -- cycle;

        %\draw (-,6) .. controls (0.5,0.5) .. (6,0);
        %\draw (-6,0) .. controls (-0.5,-0.5) .. (0,-6);

        %\draw (-6,0) .. controls (-1,0) and (1,0) .. (0,-6);
        %\shade[top color=yellow, bottom color=black] (0,0) -- (6,0) .. controls (0.5,0.5) .. (0,6) -- cycle;
            %\shade[top color=yellow, bottom color=black] (0,0) -- (-6,0) .. controls (-0.5,-0.5) .. (0,-6) -- cycle;

    \end{tikzpicture}
    \caption{$|\tau|>1$.}
    \label{fig_2002b}
  \end{subfigure}

 \caption{Coamoeba of the curve $V_f$  with defining polynomial $f(z,w)=1+\tau z^2+\tau w^2+z^2w^2$ with (A) $0<|\tau|<1$ and (B) when  $|\tau|>1$. The red curve is the set of critical values of the argument map restricted to $V_f$ which  is the {\em Hopf Link}.}
 \label{fig_2002}
\end{figure}

\begin{figure}[H]
% \begin{subfigure}[b]{0.49\textwidth}
    \centering
\begin{tikzpicture}[scale=0.80]
   %\draw [red,thick,domain=0:90] plot ({cos(\x)}, {sin(\x)});
  % \draw [blue,thick,domain=180:270] plot ({cos(\x)}, {sin(\x)});
   \draw [thick,domain=0:217] plot ({cos(\x)},{sin(\x)});
   \draw [thick,domain=227:360] plot ({cos(\x)}, {sin(\x)});
      \draw [thick,domain=0:36] plot ({-1.5+cos(\x)}, {sin(\x)});
   \draw [thick,domain=47:360] plot ({-1.5+cos(\x)}, {sin(\x)});
%    \draw [blue,thick,domain=0:165] plot ({-1+cos(\x)}, {-1+sin(\x)});
 %  \draw [blue,thick,domain=177:360] plot ({-1+cos(\x)}, {-1+sin(\x)});
 %
\end{tikzpicture}
\caption{Hopf link}
\end{figure}

%
%
%
%

%\newpage

\item[(2)]\, Here is the example of the so-called {\em Borromean ring}.

\begin{figure}[H]
 \begin{subfigure}[b]{0.49\textwidth}
    \centering
    \begin{tikzpicture}[scale=0.25]
        \draw (-6,-6) grid (6,6);

        \draw[line width=0.4mm,green] (-4,-6) -- (-4,6);
        \draw[line width=0.4mm,green] (0,-6) -- (0,6);
        \draw[line width=0.4mm,green] (4,-6) -- (4,6);
        \draw[line width=0.4mm,black] (-6,-4) -- (6,-4);
        \draw[line width=0.4mm,black] (-6,0) -- (6,0);
        \draw[line width=0.4mm,black] (-6,4) -- (6,4);

        \draw[line width=0.7mm,red]  (-4,-6) .. controls (-4.5,-4.5) .. (-6,-4);
        \draw[line width=0.7mm,red] (-2,-4) .. controls (-3.5,-3.5) .. (-4,-2);
        \draw[line width=0.7mm,red]  (-2,-4) .. controls (-0.5,-4.5) .. (0,-6);
        \draw[line width=0.7mm,red] (2,-4) .. controls (0.5,-3.5) .. (0,-2);
        \draw[line width=0.7mm,red] (2,-4) .. controls (3.5,-4.5) .. (4,-6);
        \draw[line width=0.7mm,red] (6,-4) .. controls (4.5,-3.5) .. (4,-2);
        \draw[line width=0.7mm,red] (-4,-2) .. controls (-4.5,-0.5) .. (-6,0);
        \draw[line width=0.7mm,red] (-2,0) .. controls (-3.5,0.5) .. (-4,2);
        \draw[line width=0.7mm,red] (0,-2) .. controls (-0.5,-0.5) .. (-2,0);
        \draw[line width=0.7mm,red] (2,0) .. controls (0.5,0.5) .. (0,2);
        \draw[line width=0.7mm,red] (4,-2) .. controls (3.5,-0.5) .. (2,0);
        \draw[line width=0.7mm,red] (6,0) .. controls (4.5,0.5) .. (4,2);
        \draw[line width=0.7mm,red] (-4,2) .. controls (-4.5,3.5) .. (-6,4);
        \draw[line width=0.7mm,red] (-2,4) .. controls (-3.5,4.5) .. (-4,6);
        \draw[line width=0.7mm,red] (2,4) .. controls (0.5,4.5) .. (0,6);
        \draw[line width=0.7mm,red] (-2,4) .. controls (-0.5,3.5) .. (0,2);
        \draw[line width=0.7mm,red] (2,4) .. controls (3.5,3.5) .. (4,2);
        \draw[line width=0.7mm,red] (6,4) .. controls (4.5,4.5) .. (4,6);

        \shade[top color=yellow, bottom color=black] (-4,-4) -- (-4,-6) .. controls (-4.5,-4.5) .. (-6,-4) -- cycle;
        \shade[top color=yellow, bottom color=black] (-4,-4) -- (-2,-4) .. controls (-3.5,-3.5) .. (-4,-2) -- cycle;

        \shade[top color=yellow, bottom color=black] (-4,0) -- (-4,-2) .. controls (-4.5,-0.5) .. (-6,0) -- cycle;
        \shade[top color=yellow, bottom color=black] (-4,0) -- (-2,0) .. controls (-3.5,0.5) .. (-4,2) -- cycle;

        \shade[top color=yellow, bottom color=black] (-4,4) -- (-4,2) .. controls (-4.5,3.5) .. (-6,4) -- cycle;
        \shade[top color=yellow, bottom color=black] (-4,4) -- (-2,4) .. controls (-3.5,4.5) .. (-4,6) -- cycle;

        \shade[top color=yellow, bottom color=black] (0,-4) -- (0,-6) .. controls (-0.5,-4.5) .. (-2,-4) -- cycle;
        \shade[top color=yellow, bottom color=black] (0,-4) -- (2,-4) .. controls (0.5,-3.5) .. (0,-2) -- cycle;

        \shade[top color=yellow, bottom color=black] (0,0) -- (0,-2) .. controls (-0.5,-0.5) .. (-2,0) -- cycle;
        \shade[top color=yellow, bottom color=black] (0,0) -- (2,0) .. controls (0.5,0.5) .. (0,2) -- cycle;

        \shade[top color=yellow, bottom color=black] (0,4) -- (0,2) .. controls (-0.5,3.5) .. (-2,4) -- cycle;
        \shade[top color=yellow, bottom color=black] (0,4) -- (2,4) .. controls (0.5,4.5) .. (0,6) -- cycle;
        \shade[top color=yellow, bottom color=black] (4,-4) -- (4,-6) .. controls (3.5,-4.5) .. (2,-4) -- cycle;
        \shade[top color=yellow, bottom color=black] (4,-4) -- (6,-4) .. controls (4.5,-3.5) .. (4,-2) -- cycle;

        \shade[top color=yellow, bottom color=black] (4,0) -- (4,-2) .. controls (3.5,-0.5) .. (2,0) -- cycle;
        \shade[top color=yellow, bottom color=black] (4,0) -- (6,0) .. controls (4.5,0.5) .. (4,2) -- cycle;

        \shade[top color=yellow, bottom color=black] (4,4) -- (4,2) .. controls (3.5,3.5) .. (2,4) -- cycle;
        \shade[top color=yellow, bottom color=black] (4,4) -- (6,4) .. controls (4.5,4.5) .. (4,6) -- cycle;
    \end{tikzpicture}
    \caption{$0<|\tau|<1$.}
    \label{fig_3003a}
  \end{subfigure}
  \begin{subfigure}[b]{0.49\textwidth}
    \centering
    \begin{tikzpicture}[scale=0.25]
        \draw (-6,-6) grid (6,6);

        \draw[line width=0.4mm,green] (-4,-6) -- (-4,6);
        \draw[line width=0.4mm,green] (0,-6) -- (0,6);
        \draw[line width=0.4mm,green] (4,-6) -- (4,6);
        \draw[line width=0.4mm,black] (-6,-4) -- (6,-4);
        \draw[line width=0.4mm,black] (-6,0) -- (6,0);
        \draw[line width=0.4mm,black] (-6,4) -- (6,4);

        \draw[line width=0.7mm,red]  (-4,-2) .. controls (-4.5,-3.5) .. (-6,-4);
        \draw[line width=0.7mm,red] (-2,-4) .. controls (-3.5,-4.5) .. (-4,-6);
        \draw[line width=0.7mm,red]  (-4,2) .. controls (-4.5,0.5) .. (-6,0);
        \draw[line width=0.7mm,red] (-2,0) .. controls (-3.5,-0.5) .. (-4,-2);
        \draw[line width=0.7mm,red]  (-4,6) .. controls (-4.5,4.5) .. (-6,4);
        \draw[line width=0.7mm,red] (-2,4) .. controls (-3.5,3.5) .. (-4,2);
        \draw[line width=0.7mm,red]  (0,-2) .. controls (-0.5,-3.5) .. (-2,-4);
        \draw[line width=0.7mm,red] (2,-4) .. controls (0.5,-4.5) .. (0,-6);
        \draw[line width=0.7mm,red]  (0,2) .. controls (-0.5,0.5) .. (-2,0);
        \draw[line width=0.7mm,red] (2,0) .. controls (0.5,-0.5) .. (0,-2);
        \draw[line width=0.7mm,red]  (0,6) .. controls (-0.5,4.5) .. (-2,4);
        \draw[line width=0.7mm,red] (2,4) .. controls (0.5,3.5) .. (0,2);
        \draw[line width=0.7mm,red]  (4,-2) .. controls (3.5,-3.5) .. (2,-4);
        \draw[line width=0.7mm,red] (6,-4) .. controls (4.5,-4.5) .. (4,-6);
        \draw[line width=0.7mm,red]  (4,2) .. controls (3.5,0.5) .. (2,0);
        \draw[line width=0.7mm,red] (6,0) .. controls (4.5,-0.5) .. (4,-2);
        \draw[line width=0.7mm,red]  (4,6) .. controls (3.5,4.5) .. (2,4);
        \draw[line width=0.7mm,red] (6,4) .. controls (4.5,3.5) .. (4,2);
        \shade[top color=yellow, bottom color=black] (4,-4) -- (6,-4) .. controls (4.5,-4.5) .. (4,-6)-- cycle;
        \shade[top color=yellow, bottom color=black] (4,-4) -- (4,-2) .. controls (3.5,-3.5) .. (2,-4)-- cycle;

        \shade[top color=yellow, bottom color=black] (4,0) -- (4,2) .. controls (3.5,0.5) .. (2,0) -- cycle;
        \shade[top color=yellow, bottom color=black] (4,0) -- (6,0) .. controls (4.5,-0.5) .. (4,-2)-- cycle;

        \shade[top color=yellow, bottom color=black] (4,4) -- (4,6) .. controls (3.5,4.5) .. (2,4) -- cycle;
        \shade[top color=yellow, bottom color=black] (4,4) -- (6,4) .. controls (4.5,3.5) .. (4,2) -- cycle;

        \shade[top color=yellow, bottom color=black] (0,-4) -- (2,-4) .. controls (0.5,-4.5) .. (0,-6)-- cycle;
        \shade[top color=yellow, bottom color=black] (0,-4) -- (0,-2) .. controls (-0.5,-3.5) .. (-2,-4)-- cycle;

        \shade[top color=yellow, bottom color=black] (0,0) -- (0,2) .. controls (-0.5,0.5) .. (-2,0) -- cycle;
        \shade[top color=yellow, bottom color=black] (0,0) -- (2,0) .. controls (0.5,-0.5) .. (0,-2)-- cycle;

        \shade[top color=yellow, bottom color=black] (0,4) -- (0,6) .. controls (-0.5,4.5) .. (-2,4) -- cycle;
        \shade[top color=yellow, bottom color=black] (0,4) -- (2,4) .. controls (0.5,3.5) .. (0,2) -- cycle;

        \shade[top color=yellow, bottom color=black] (-4,-4) -- (-2,-4) .. controls (-3.5,-4.5) .. (-4,-6)-- cycle;
        \shade[top color=yellow, bottom color=black] (-4,-4) -- (-4,-2) .. controls (-4.5,-3.5) .. (-6,-4)-- cycle;

        \shade[top color=yellow, bottom color=black] (-4,0) -- (-4,2) .. controls (-4.5,0.5) .. (-6,0) -- cycle;
        \shade[top color=yellow, bottom color=black] (-4,0) -- (-2,0) .. controls (-3.5,-0.5) .. (-4,-2)-- cycle;

        \shade[top color=yellow, bottom color=black] (-4,4) -- (-4,6) .. controls (-4.5,4.5) .. (-6,4) -- cycle;
        \shade[top color=yellow, bottom color=black] (-4,4) -- (-2,4) .. controls (-3.5,3.5) .. (-4,2) -- cycle;
    \end{tikzpicture}
    \caption{$|\tau|>1$.}
    \label{fig_3003b}
  \end{subfigure}
%   %
%   %
%   \begin{subfigure}[b]{0.49\textwidth}
%     \centering
%     
%     \caption{}
%     \label{fig_2200}
%   \end{subfigure}
  %
  %
 \caption{Coamoeba of the curve $V_f$ with defining polynomial $f(z,w)=1+\tau z^3+\tau w^3+z^3w^3$ with (A) $0<|\tau|<1$ and (B) $|\tau|>1$.  The union of the one-dimensional connected components of the critical values of the argument map restricted to the curve $V_f$ is precisely the so-called {\em Borromean rings}.}
 \label{fig_3003}
\end{figure}

\begin{figure}[H]
\centering
\def\firstcircle{ (0.0, 0.0) circle (0.5)}
\def\secondcircle{(0.8, 0.0) circle (0.5)}
\def\thirdcircle{ (0.5,-0.5) circle (0.5)}
\def\rectangle{ (-1.5,-3.0) rectangle (3.5,1.0) }
\colorlet{circle edge}{black}
\colorlet{circle area}{white}

\tikzset{filled/.style={fill=circle area, draw=circle edge, thick},
    outline/.style={draw=circle edge, thick}}

\begin{tikzpicture}[scale=0.90]
   %\draw [red,thick,domain=0:90] plot ({cos(\x)}, {sin(\x)});
  % \draw [blue,thick,domain=180:270] plot ({cos(\x)}, {sin(\x)});
 \draw [thick,domain=0:217] plot ({cos(\x)},{sin(\x)});
  % \draw [thick,domain=227:360] plot ({cos(\x)}, {sin(\x)});
   
   \draw [thick,domain=227:266] plot ({cos(\x)}, {sin(\x)});
   \draw [thick,domain=274:360] plot ({cos(\x)}, {sin(\x)});
   
      \draw [thick,domain=0:36] plot ({-1.5+cos(\x)}, {sin(\x)});
      \draw [thick,domain=356:360] plot ({-1.5+cos(\x)}, {sin(\x)});
   \draw [thick,domain=47:348] plot ({-1.5+cos(\x)}, {sin(\x)});
   
   % \draw [blue,thick,domain=0:165] plot ({-1+cos(\x)}, {-1+sin(\x)});
    
     \draw [black,thick,domain=0:86] plot ({-1+cos(\x)}, {-1+sin(\x)});
     \draw [black,thick,domain=96:165] plot ({-1+cos(\x)}, {-1+sin(\x)});

   \draw [black,thick,domain=177:360] plot ({-1+cos(\x)}, {-1+sin(\x)});
\end{tikzpicture}
 \caption{{\em Borromean rings}.}
 \label{fig_3003}

\end{figure}

%
%
%
%

%
%
%
%

%\newpage

\item[(3)]\, Here is the example of the {\em Trefoil knot}:
\begin{figure}[H]
 \begin{subfigure}[b]{0.49\textwidth}
    \centering
    \begin{tikzpicture}[scale=0.25]
        \draw (-6,-6) grid (6,6);

        \draw[line width=0.4mm,green] (-3,-6) -- (-3,6);
        \draw[line width=0.4mm,green] (3,-6) -- (3,6);
        %\draw[line width=0.4mm,green] (4,-6) -- (4,6);
        \draw[line width=0.4mm,black] (-6,-4) -- (6,-4);
        \draw[line width=0.4mm,black] (-6,0) -- (6,0);
        \draw[line width=0.4mm,black] (-6,4) -- (6,4);

        \draw[line width=0.7mm,red]  (-6,-4) .. controls (-3.5,-4.5) .. (-3,-6);
        \draw[line width=0.7mm,red] (0,-4) .. controls (-2.5,-3.5) .. (-3,-2);

        \draw[line width=0.7mm,red]  (0,-4) .. controls (2.5,-4.5) .. (3,-6);
        \draw[line width=0.7mm,red] (6,-4) .. controls (3.5,-3.5) .. (3,-2);

        \draw[line width=0.7mm,red]  (-6,0) .. controls (-3.5,-0.5) .. (-3,-2);
        \draw[line width=0.7mm,red] (0,0) .. controls (-2.5,0.5) .. (-3,2);

        \draw[line width=0.7mm,red]  (0,0) .. controls (2.5,-0.5) .. (3,-2);
        \draw[line width=0.7mm,red] (6,0) .. controls (3.5,0.5) .. (3,2);

        \draw[line width=0.7mm,red]  (-6,4) .. controls (-3.5,3.5) .. (-3,2);
        \draw[line width=0.7mm,red] (0,4) .. controls (-2.5,4.5) .. (-3,6);

        \draw[line width=0.7mm,red]  (0,4) .. controls (2.5,3.5) .. (3,2);
        \draw[line width=0.7mm,red] (6,4) .. controls (3.5,4.5) .. (3,6);

        \shade[top color=yellow, bottom color=black] (-3,-4) --  (-6,-4) .. controls (-3.5,-4.5) .. (-3,-6) -- cycle;
        \shade[top color=yellow, bottom color=black] (-3,-4) -- (0,-4) .. controls (-2.5,-3.5) .. (-3,-2)-- cycle;

        \shade[top color=yellow, bottom color=black] (3,-4) --  (0,-4) .. controls (2.5,-4.5) .. (3,-6) -- cycle;
        \shade[top color=yellow, bottom color=black] (3,-4) -- (6,-4) .. controls (3.5,-3.5) .. (3,-2)-- cycle;

        \shade[top color=yellow, bottom color=black] (-3,0) --  (-6,0) .. controls (-3.5,-0.5) .. (-3,-2) -- cycle;
        \shade[top color=yellow, bottom color=black] (-3,0) -- (0,0) .. controls (-2.5,0.5) .. (-3,2)-- cycle;

        \shade[top color=yellow, bottom color=black] (3,0) --  (0,0) .. controls (2.5,-0.5) .. (3,-2) -- cycle;
        \shade[top color=yellow, bottom color=black] (3,0) -- (6,0) .. controls (3.5,0.5) .. (3,2)-- cycle;
        \shade[top color=yellow, bottom color=black] (-3,4) --  (-6,4) .. controls (-3.5,3.5) .. (-3,2) -- cycle;
        \shade[top color=yellow, bottom color=black] (-3,4) -- (0,4) .. controls (-2.5,4.5) .. (-3,6)-- cycle;

        \shade[top color=yellow, bottom color=black] (3,4) --  (0,4) .. controls (2.5,3.5) .. (3,2) -- cycle;
        \shade[top color=yellow, bottom color=black] (3,4) -- (6,4) .. controls (3.5,4.5) .. (3,6)-- cycle;
    \end{tikzpicture}
    \caption{$0<|\tau|<1$.}
    \label{fig_abcda}
  \end{subfigure}
%   %
%   %
 \begin{subfigure}[b]{0.49\textwidth}
    \centering
    \begin{tikzpicture}[scale=0.25]
        \draw (-6,-6) grid (6,6);

        \draw[line width=0.4mm,green] (-3,-6) -- (-3,6);
        \draw[line width=0.4mm,green] (3,-6) -- (3,6);
        %\draw[line width=0.4mm,green] (4,-6) -- (4,6);
        \draw[line width=0.4mm,black] (-6,-4) -- (6,-4);
        \draw[line width=0.4mm,black] (-6,0) -- (6,0);
        \draw[line width=0.4mm,black] (-6,4) -- (6,4);

        \draw[line width=0.7mm,red]  (-3,-6) .. controls (-2.5,-4.5) .. (0,-4);
        \draw[line width=0.7mm,red] (-3,-2) .. controls (-3.5,-3.5) .. (-6,-4);

        \draw[line width=0.7mm,red]  (-3,-2) .. controls (-2.5,-0.5) .. (0,0);
        \draw[line width=0.7mm,red] (-3,2) .. controls (-3.5,0.5) .. (-6,0);

        \draw[line width=0.7mm,red]  (-3,2) .. controls (-2.5,3.5) .. (0,4);
        \draw[line width=0.7mm,red] (-3,6) .. controls (-3.5,4.5) .. (-6,4);

        \draw[line width=0.7mm,red]  (3,-6) .. controls (3.5,-4.5) .. (6,-4);
        \draw[line width=0.7mm,red] (3,-2) .. controls (2.5,-3.5) .. (0,-4);

        \draw[line width=0.7mm,red]  (3,-2) .. controls (3.5,-0.5) .. (6,0);
        \draw[line width=0.7mm,red] (3,2) .. controls (2.5,0.5) .. (0,0);

        \draw[line width=0.7mm,red]  (3,2) .. controls (3.5,3.5) .. (6,4);
        \draw[line width=0.7mm,red] (3,6) .. controls (2.5,4.5) .. (0,4);

        \shade[top color=yellow, bottom color=black] (-3,-4) -- (-3,-6) .. controls (-2.5,-4.5) .. (0,-4) -- cycle;
        \shade[top color=yellow, bottom color=black] (-3,-4) -- (-3,-2) .. controls (-3.5,-3.5) .. (-6,-4) -- cycle;

        \shade[top color=yellow, bottom color=black] (-3,0) --  (-3,-2) .. controls (-2.5,-0.5) .. (0,0) -- cycle;
        \shade[top color=yellow, bottom color=black] (-3,0) -- (-3,2) .. controls (-3.5,0.5) .. (-6,0) -- cycle;

        \shade[top color=yellow, bottom color=black] (-3,4) --  (-3,2) .. controls (-2.5,3.5) .. (0,4) -- cycle;
        \shade[top color=yellow, bottom color=black] (-3,4) -- (-3,6) .. controls (-3.5,4.5) .. (-6,4) -- cycle;

        \shade[top color=yellow, bottom color=black] (3,-4) -- (3,-6) .. controls (3.5,-4.5) .. (6,-4) -- cycle;
        \shade[top color=yellow, bottom color=black] (3,-4) -- (3,-2) .. controls (2.5,-3.5) .. (0,-4) -- cycle;

        \shade[top color=yellow, bottom color=black] (3,0) --  (3,-2) .. controls (3.5,-0.5) .. (6,0) -- cycle;
        \shade[top color=yellow, bottom color=black] (3,0) -- (3,2) .. controls (2.5,0.5) .. (0,0) -- cycle;

        \shade[top color=yellow, bottom color=black] (3,4) --  (3,2) .. controls (3.5,3.5) .. (6,4) -- cycle;
        \shade[top color=yellow, bottom color=black] (3,4) -- (3,6) .. controls (2.5,4.5) .. (0,4) -- cycle;
     \end{tikzpicture}
    \caption{$|\tau|>1$.}
    \label{fig_2003b}
  \end{subfigure}
  %
  %
%   \begin{subfigure}[b]{0.49\textwidth}
%     \centering
%     
%     \caption{}
%     \label{fig_2200}
%   \end{subfigure}
  %
  %
 \caption{Coamoeba of the curve with defining polynomial $f(z,w)=1+\tau z^2+\tau w^3 + z^2w^3$ with (A) $0<|\tau|<1$ and (B) $|\tau|>1$  ({\em Trefoil knots}).}
 \label{fig_abcd}
\end{figure}

\begin{figure}[H]
\begin{subfigure}[b]{0.4\textwidth}
 \centering 
 \includegraphics[scale=0.25]{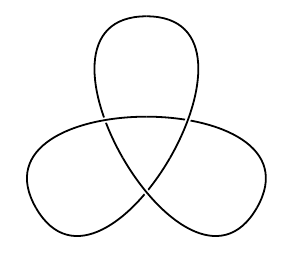}
%  \begin{tikzpicture}
%     \begin{knot}[
%     consider self intersections=true,
%     %  draft mode=crossings,
%     flip crossing=2,
%     only when rendering/.style={
%     %    show curve controls
%     }
%     ]
%     \strand (0,2) .. controls +(2.2,0) and +(120:-2.2) .. (210:2) .. controls +(120:2.2) and +(60:2.2) .. (-30:2) .. controls +(60:-2.2) and +(-2.2,0) .. (0,2);
%     \end{knot}
%  \end{tikzpicture}
 \caption{}
 \label{fig_trefoil1}
 \end{subfigure}
% \caption{}
 %%%%%%%
%%%%%%%
%%%%%%%
 \begin{subfigure}[b]{0.4\textwidth}
  \centering
  \includegraphics[scale=0.25]{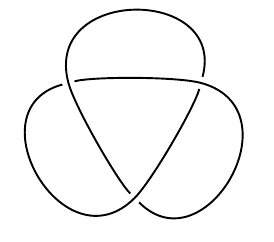}
% \begin{tikzpicture}[scale=1.2]
% \foreach \brk in {0,1,2} {
% \begin{scope}[rotate=\brk * 120]
% \node (k\brk) at (0,-1) {};
% \end{scope}
% }
% \draw[line width=0.3mm] (0,0) \foreach \brk in {0,1,2} {let \n0=\brk, \n1={int(Mod(\brk+1,3))}, \n2={int(Mod(\brk+2,3))} in (k\n0) .. controls (k\n0.16 south east) and (k\n1.16 south west) .. (k\n1.center) .. controls (k\n1.4 north east) and (k\n2.4 north west) .. (k\n2)} (k2);
% \end{tikzpicture}
 \caption{}
 \label{fig_trefoil2}
\end{subfigure}
\caption{Trefoil knots}
\label{fig_knot_trefoil}
\end{figure}

\end{itemize}
\end{example}

\end{document}